\documentclass[11pt]{amsart}

\usepackage{amscd}
\usepackage{amsthm}
\usepackage{amsmath}
\usepackage{amsfonts}
\usepackage{amssymb}
\usepackage[all]{xy}
\usepackage{graphicx}
\usepackage{verbatim}
\usepackage{color}
\usepackage{enumerate}
\usepackage{MnSymbol}
\usepackage{calrsfs}

\DeclareMathAlphabet{\pazocal}{OMS}{zplm}{m}{n}

\newtheorem{theorem}{Theorem}[section]
\newtheorem{lemma}[theorem]{Lemma}
\newtheorem{proposition}[theorem]{Proposition}
\newtheorem{corollary}[theorem]{Corollary}

\theoremstyle{definition}

\newtheorem{example}[theorem]{Example}

\newtheorem{remark}[theorem]{Remark}
\newtheorem{notation}[theorem]{Notation}

\setlength{\textheight}{8in}
\setlength{\oddsidemargin}{.5in}
\setlength{\evensidemargin}{.5in}
\setlength{\textwidth}{5.5in}

\usepackage{amsmath}
\usepackage{url}

\def\Jac{\mbox{\rm Jac$\:$}}

\def\O{\mathcal{O}}
\def\ff{\frak}
\def\Spec{\mbox{\rm Spec}}

\def\Max{\mbox{\rm Max}}
\def\End{\mbox{\rm End}}

\def\Zar{\mbox{\rm Zar}}

\def\patch{{\rm patch}}
\def\gen{\mbox{\rm gen}}

\def\inv{{\rm inv}}

\def\cl{\mbox{\rm cl}}
\def\cal{\mathcal}

\def\X{{\rm Zar}}

\begin{document}

%% Title, authors and addresses

%% use the tnoteref command within \title for footnotes;
%% use the tnotetext command for the associated footnote;
%% use the fnref command within \author or \address for footnotes;
%% use the fntext command for the associated footnote;
%% use the corref command within \author for corresponding author footnotes;
%% use the cortext command for the associated footnote;
%% use the ead command for the email address,
%% and the form \ead[url] for the home page:
%%
%% \title{Title\tnoteref{label1}}
%% \tnotetext[label1]{}
%% \author{Name\corref{cor1}\fnref{label2}}
%% \ead{email address}
%% \ead[url]{home page}
%% \fntext[label2]{}
%% \cortext[cor1]{}
%% \address{Address\fnref{label3}}
%% \fntext[label3]{}

\title[Compact sets and holomorphy rings]{A principal ideal theorem for compact sets  of rank one valuation rings}
%Intersections of rank one valuation rings with an application to  two-dimensional Noetherian domains
 
 %Schemes and compactness in 
 \date{}

\thanks{MSC: 13A18;  13F05; 14A15}

\author{Bruce Olberding}

\address{Department of Mathematical Sciences, New Mexico State University,
Las Cruces, NM 88003-8001}

\maketitle

\begin{abstract} Let $F$ be a field, and let $\X(F)$ be the space of  valuation rings of $F$ with respect to the Zariski topology.    We prove that if $X$ is  a quasicompact set of 
 rank one valuation rings in $\X(F)$ whose maximal ideals do not intersect to $0$, then 
the intersection  of the rings in $X$ is an integral domain with quotient field $F$ such that every finitely generated ideal is a principal ideal. 
 %there is a subring $A$ of $F$ such that  $X  = \{A_M:M \in \Max(A)\}$; i.e., 
 % $X \cup \{F\}$ is an affine scheme in the locally ringed space $\X(F)$; alternatively, $A = \bigcap_{V \in X}V$ is a Pr\"ufer domain with quotient field $F$.  
  %Schemes are characterized similarly as locally quasicompact sets.  
  To prove this result, we develop a duality between (a) quasicompact sets of rank one valuation rings whose maximal ideals do not intersect to $0$, and (b) one-dimensional Pr\"ufer domains with nonzero Jacobson radical and quotient field  $F$. 
 %   In an appendix we show the Picard group of such an affine scheme is trivial. 
    %and we use this in the non-affine case   to give  a functional representation for the Cartier divisors.  
    The necessary restriction in all these cases to collections of valuation rings whose maximal ideals do not intersect to $0$ is motivated by settings in which the valuation rings considered all dominate a given local ring. 
    %To illustrate this, we apply these results to the classical setting of divisorial valuation overrings of two-dimensional local Noetherian domains and show that 
%natural conditions involving quadratic transforms yield affine schemes in the Zariski-Riemann space of $F$.  

 \end{abstract}

\section{Introduction}

Throughout this article, $F$ denotes a field and $\X(F)$ denotes the set of valuation rings of $F$; i.e., the subrings $V$ of $F$ such that for all $0 \ne t \in F$, $t \in V$ or $t^{-1} \in V$.  
%By a theorem of Krull, every integrally closed subring of $F$ is an intersection of valuation rings of $F$. 
 In this article we are interested in  subrings  $A$ of $F$ which are an intersection of rank one valuation rings in a quasicompact subset of $\Zar(F)$. %This in itself is not very determinative for the structure of $A$.  For example, every integrally closed Noetherian domain with quotient field $F$ occurs as such a ring (take as the collection $X$ the valuation rings that are the localizations of $A$ at height one prime ideals). 
  The {\it rank} of a valuation ring, which coincides with its Krull dimension, is the real rank of its value group. Thus the rank one valuation rings have valuations  that take values in ${\mathbb{R}} \cup \{\infty\}$.
The {\it Zariski topology} on $\X(F)$ is the topology having as a basis of open sets the subsets of $\X(F)$ of the form $\{V \in \X(F):t_1,\ldots,t_n \in F\}$ for $t_1,\ldots,t_n \in F$. With this topology, $\X(F)$ is the {\it Zariski-Riemann space of $F$}.  The main purpose of this article is to prove the following 
instance of what Roquette \cite{Roq} calls a 
Principal Ideal Theorem, that is,  a theorem which guarantees a given class of integral domains has the property that every finitely generated ideal  is principal. 

\medskip

{\noindent}{\bf Main Theorem.} {\it  If $X$ is a  quasicompact set of rank one  valuation rings in $\X(F)$ whose maximal ideals do not intersect to $0$, then the intersection of the valuation rings in $X$ is an integral domain with quotient field $F$ and Krull dimension one such that  every finitely generated ideal  is a principal ideal.  }

\medskip

%Approximation theorems?

The theorem, which most of the paper is devoted to proving, asserts that the intersection of such valuation rings is  a {\it B\'ezout domain}, a domain for which every finitely generated ideal is principal. Such rings belong to  the extensively studied  class of {\it Pr\"ufer domains}, those domains $A$ for which $A_M$ is a valuation domain for each maximal ideal $M$ of $A$.  
  The problem of when an intersection of valuation rings is a Pr\"ufer domain is a difficult but well-studied problem that has applications to real algebraic geometry (e.g., \cite{Bec82, Ber95, BK89, Sch82b}), non-Noetherian commutative ring theory (e.g., \cite{Gil, LT, Rus}), formally $p$-adic holomorphy rings \cite{Roq} and the study of integer-valued polynomials \cite{CC, Lop}. 
      Using the equivalence of this problem to that of 
     determining when a subspace of $\X(F)$ yields an affine scheme, a geometric criterion involving morphisms of the Zariski-Riemann space into the projective line was given in \cite{OGeo}.  
     In that approach, treating the Zariski-Riemann space as a locally ringed space, not simply a topological space, is crucial.  
        %By contrast, Theorem A and Corollary B show that 
%In applications (e.g., Berkovich spaces, tropical geometry, valuative trees and divisor theory) one often restricts to rank one valuation rings.    In the present article we consider the Pr\"ufer intersection/affine scheme question for spaces of rank one valuation rings. Our main results show that in contrast to the general setting of higher rank valuation rings,
%for rank one valuation rings, 
 By contrast, the main theorem  shows that unlike in the general case, the geometry of the  locally ringed space structure is not needed to distinguish 
a B\'ezout intersection   when the valuation rings satisfy the hypotheses of the theorem. Instead, the question of whether the intersection  of such a collection of rank one valuation rings  is a Pr\"ufer domain is purely topological. 

 Moving beyond rank one valuation rings,   quasicompactness of a subset $X$ of $\X(F)$ is far from sufficient to guarantee that the intersection of valuation rings in $X$ is a B\'ezout domain. 
 For example, if $D$ is a Noetherian local domain with quotient field $F$,  
%  is a Noetherian integrally closed domain with quotient field $F$, 
%then the set $X =\{D_P:P$ a height $1$ prime ideal of $D\}$ is a quasicompact set of rank one valuation rings   
  then for any $t_1,\ldots,t_n \in F$,  $$ {\cal U}= \{V \in \X(F/D):x_1,\ldots,x_n \in V\}$$ is quasicompact, but  the intersection of the valuation rings in ${\cal U}$ is a B\'ezout domain if and only if the integral closure of $D[x_1,\ldots,x_n]$ is a principal ideal domain, something that occurs only for very special choices of $D$ and $x_1,\ldots,x_n$. In fact, 
the main  theorem is optimal in the sense that if any one of the hypotheses ``quasicompact'', ``rank one'', or ``the maximal ideals do not intersect to $0$'' is omitted, the conclusion is false; see Example~\ref{main example}. Note that the condition that the maximal ideals of the rank one valuation rings in $X$ do not intersect to $0$ occurs naturally in settings where the valuation rings in $X$ are assumed to dominate a local ring of Krull dimension $>0$. Examples of such settings of recent interest include Berkovich spaces and tropical geometry (see for example \cite{GG}) and valuative trees of regular local rings  \cite{FJ, Granja}.

%Nullstellensatz, Galois connection, adjoint pair. 

 Interest in compactness in the Zariski-Riemann space  dates back to Zariski's introduction of the topology on $\Zar(F)$ in \cite{ZCom}. 
 %
 %is as old as the notion of the Zariski-Riemann space itself. 
If $D$ is a subring of $F$, then $\X(F/D)$, the subspace of $\X(F)$ consisting of the valuation rings in $\X(F)$ that contain $D$ as a subring, is the {\it Zariski-Riemann space of $F/D$}. 
That  $\X(F/D)$ is quasicompact was proved by Zariski \cite{ZCom} in 1944 as a step in  his  program for resolution of singularities of surfaces and three-folds. In more recent treatments of the topology of $\X(F/D)$ such as \cite{FFL, OZR},
 quasicompactness is viewed as part of a more refined topological picture that treats  $\X(F/D)$ as a spectral space 
  or as a locally ringed space that is a projective limit of projective schemes. The latter  point of view also has its origins in Zariski's work  \cite{ZCom}.  

 However, in restricting to the subspace of rank one valuation rings of $F$, key topological  features of $\X(F)$ are lost. For example, this subspace need not be  spectral, nor even quasicompact.  
Yet, in passing to the space of rank one valuation rings, the main theorem shows  that the topology becomes much more consequential for the ring-theoretic structure of an intersection of valuation rings.  One of the key steps in proving this is first showing that in the setting of the main theorem, the intersection of valuation rings in $X$  is a Pr\"ufer domain. %A B\'ezout domain is necessarily a Pr\"ufer domain.
   We prove this in Theorem~\ref{correspond 1} by establishing the following lemma. 
 For a subset $X $ of $\X(F)$, we let $A(X) = \bigcap_{V \in X}V$ be the {\it holomorphy ring}\footnote{See Roquette \cite{Roq} for an explanation of this terminology.} of $X$ and $J(X) = \bigcap_{V \in X}{\ff M}_V$ be the ideal of $A(X)$ determined by the intersection of the  maximal ideals ${\ff M}_V$ of the valuation rings $V$.  
 For a ring $A$, $\Max(A)$ denotes its set of maximal ideals. %and $J(A)$ its Jacobson radical. 

\medskip

{\noindent}{\bf Main Lemma.} {\it The mappings  \begin{center} $X \mapsto A(X)$ \: and \:  $A \mapsto \{A_M:M \in \Max(A)\}$\end{center}
 define a bijection between
  the quasicompact sets  $X$ of rank one valuation rings in  $\X(F)$ with $J(X) \ne 0$ and 
the one-dimensional Pr\"ufer  domains $A$ with nonzero Jacobson radical and  quotient field $F$. }

\medskip

Corollary~\ref{correspond 2} gives  another version of this result in which the spaces $X$ need not be assumed {\it a priori} to satisfy $J(X) \ne 0$. In this case, ``quasicompact'' is replaced with ``compact'' ($=$ quasicompact and Hausdorff), and the holomorphy rings are all assumed to have quotient field $F$. 
%  It is important that some restriction such as Hausdorffness or $J(X) \ne 0$  is present. 
% In fact, it is shown in Example~\ref{main example} that $A(X)$ need not be a Pr\"ufer domain if any one of the following requirements from the Main Theorem is omitted: (a) $X$ consists of rank one valuation rings, (b) $X$ is quasicompact or (c) $J(X) \ne 0$.  
    A consequence of the main lemma  is that if $X$ is quasicompact, $J(X) \ne 0$ and $X$ consists of rank one valuation rings, then  
$X = \{A(X)_M:M \in \Max(A(X))\}$.   
  As this suggests, the main lemma can be recast in the language of schemes, and we do this in Corollary~\ref{scheme}.

  The applicability of the main theorem  depends on whether a space of rank one valuation rings can be determined to be quasicompact. The key technical observation behind our approach  is that if a subset  $X$ of $\X(F)$ consists of rank one valuation rings and $J(X) \ne 0$, then   $X$ is quasicompact if and only if $X$ is  closed in the patch topology of $\X(F)$. (See Section 2.)  On the level of proofs and examples, this reduces the issue of quasicompactness  to calculation of patch limits point of $X$ in $\X(F)$, and specifically whether such  limit points have rank one. 
  In a future paper \cite{ODiv}  we use the results of the present article along with additional methods to show how to apply these ideas to divisorial valuation overrings of a Noetherian local domain of Krull dimension two. This is discussed in Remark~\ref{last remark}.
  
  The present paper is motivated by recent work such as in  \cite{FFL, FS, OIrr, ONoeth, OGeo, OSpringer, OGraz, OZR}  on understanding 
 %the relationship between the topology of $\X$, its geometry as a locally ringed space, and the integrally closed rings arising as intersections of valuation rings from subspaces of $\X$. Phrased another way, the goal is to understand 
 how topological or geometric properties of a space of valuation rings are reflected in the algebraic structure of the intersection of these valuation rings.

%  To illustrate this, in Section 7  we apply the ideas of the paper to the classical setting of rank one valuation rings that birationally dominate a local Noetherian domain of Krull dimension two. 
  
%\medskip

%{\it Notation and terminology}. By an {\it overring} of a domain, we mean a ring between the domain and its quotient field. 
 % overring. 
 % Jacobson radical. ${\ff M}_V$.    

\section{Topological preliminaries}

%We recall  the notions of limit points and isolated points in a topological space. 

%\begin{remark} \label{infinite T2} {\em Let $X$ be a Hausdorff space, and let $Y$ be a subspace of $X$. Then $x \in X$ is a limit point of $Y$ in $X$ if and only if every open neighborhood $U$ of $x$ contains infinitely many points of $Y$.}
%\end{remark}

In this section we outline the topological point of view needed for the later sections. Recall that throughout the paper, $F$ denotes an arbitrary  field. 
%First, we state more formally the topological structure of the Zariski-Riemann 
%space $\X(F)$. 

\begin{notation} {For each subset $S$ of the field $F$, let \begin{center} ${\cal U}(S) = \{V \in \X(F):S \subseteq V\}$ \: and \: ${\cal V}(S) = \{V \in \X(F):S \not \subseteq V\}$. \end{center} }
\end{notation}

The {\it Zariski topology} on $\X(F)$ has as a basis of nonempty open sets the sets of the form ${\cal U}(x_1,\ldots,x_n)$, where  $x_1,\ldots,x_n \in F$. The set $\X(F)$ with the Zariski topology is the {\it Zariski-Riemann space} of $F$.  
%In all later sections, $F$ will remain fixed, but we sometimes restrict to those valuation rings $V \in \X$ that contain a given subring $A$ of $F$. In this case, we write $\X_{F/A}$ for the set of valuation rings in $\X$ that contain $A$.  Note that $\X_{F/A}$ is simply ${\cal U}(A)$.  
Several authors have established independently that the Zariski-Riemann space $\X(F)$ is a {\it spectral space}\footnote{The terminology is motivated by a theorem of Hochster \cite{Hoc} that shows a space is spectral if and only if it is homeomorphic to the prime spectrum of a ring}, meaning that (a) $\X(F)$ is quasicompact and $T_0$, (b) $\X(F)$ has a basis of quasicompact open sets, (c) the intersection of finitely many quasicompact open sets in $\X(F)$ is quasicompact, and (d) every nonempty irreducible closed set in $\X(F)$ has a unique generic point. 
 See \cite{FFL} and \cite{OZR} for more on the history of this central result.  A spectral space admits the {\it specialization order} $\leq $ given by $x \leq y$ if and only if $y$ is in the closure of $\{x\}$. In the Zariski  topology on $\X(F)$, we have for $V , W \in \X(F)$ that $V \leq W$ if and only if $W \subseteq V$. We use the specialization order in the results in this section but not elsewhere in the paper.

  As a spectral space, $\X(F)$ admits two other useful topologies, the inverse and patch topologies.  
 The {\it inverse topology} on $\X(F)$ is the topology  that has, as a  basis of closed sets, the subsets of $\X(F)$ that are quasicompact and open in the Zariski topology; i.e., the nonempty closed sets are intersections of finite unions of sets of the form $\cal{U}(x_1,\ldots,x_n)$, $x_1,\ldots,x_n \in F$.   The inverse topology is useful for dealing with issues of irredundance and uniqueness of representations of integrally closed subrings of $F$; for example, see \cite{OGraz}.  In the present article, we use the inverse topology in a limited way.

The most important topology on $\X(F)$ for the purposes of this article is the  {\it patch topology}
on $\X(F)$, which is given by the topology that  has as a basis of open sets the subsets of $\X(F)$  of the form  \begin{center}
${\cal U}(x_1,\ldots,x_n), $ \:  ${\cal V}(y_1)$ \:  or \:  ${\cal U}(x_1,\ldots,x_n)  \cap {\cal V}(y_1) \cap \cdots \cap {\cal V}(y_m)$,
\end{center}
where $ x_1,\ldots,x_n,y_1,\ldots,y_m \in F$. The complement in $\X(F)$ of any set in this basis is again open in the patch topology. Thus  the patch topology has as a basis sets that are both closed and open  (i.e., the patch topology is zero-dimensional). The  patch topology is also spectral and hence  quasicompact
 \cite[p.~45]{Hoc}.    Unlike the Zariski and inverse topologies on $\X(F)$, the patch topology  is always  Hausdorff \cite[Theorem~1]{Hoc}.  
 
 \medskip
 
{\bf Convention.} In the article we work with all three topologies, inverse, patch and Zariski, sometimes even in the same proof. To avoid confusion, we insert the adjective ``patch'' before a topological property when working with it in the patch topology. For example, a ``patch open set'' is a set that is open in the patch topology. Similarly, we insert ``inverse'' as an adjective when working with the inverse topology. If no adjective is present (e.g., ``the set $Z$ is quasicompact''), this is always to be understood as indicating we are working in the Zariski topology. Thus the Zariski topology is the default topology if no other topology is specified. Recall also from the Introduction that by {\it compact} we mean both quasicompact and Hausdorff.

\medskip

One of our main technical devices in the paper is that of a patch  limit point. 
  Let $X \subseteq \X(F)$.  Then $V \in \X(F)$ is a {\it patch limit point of $X$} if each patch open neighborhood ${\cal U}$ of $V$ in $X$ contains a point in $X$ distinct from $V$; equivalently (since the patch topology is Hausdorff), every patch open neighborhood ${\cal U}$  
  of $V$ contains infinitely many valuation rings in $X$.   Applying the relevant definitions, it follows that $V$ is a patch limit point of $X$ if and only if for all finite (possibly empty) subsets $S$ of $V$ and $T$ of ${\ff M}_V$ there is a valuation ring $U$ in $X \setminus \{V\}$ such that $S \subseteq U$ and $T \subseteq {\ff M}_U$.\footnote{Throughout the paper, we denote the maximal ideal of a valuation ring $V$ by ${\ff M}_V$.}

\begin{notation} \label{nota} Let $\emptyset \ne X \subseteq \X(F)$. We use the following notation.

\begin{enumerate}[$(1)$] 
\item[$\bullet$] $\lim(X) = $ the set of patch limit points of $X$ in $\X(F)$. 
\item[$\bullet$] $\patch(X) = X \cup \lim(X) = $ closure of $X$ in the patch topology of $\X(F)$.
\item[$\bullet$] $A(X) = \bigcap_{V \in X}V = $ holomorphy ring of $X$. 
\item[$\bullet$] $J(X) = \bigcap_{V \in X}{\ff M}_V$.  

%\item[$\bullet$] $\inv(Z) = $ closure of $Z$ in $\X(F)$ with respect to the inverse topology.

%\item[$\bullet$] $\gen(Z) = $ closure under generalizations $ = \{V \in \X(F):U \subseteq V$ for some $U \in Z\}$.
\end{enumerate}
\end{notation}

%If $y_1,\ldots,y_m$ are nonzero, then  $V \in \X(F)$ is a member of the set in (c) if and only if $x_1,\ldots,x_n\in V$ and $1/y_1,\ldots,1/y_n \in {\ff M}_V$.  On the other hand, if one of the $y_i$'s is $0$, then the set in (c) is empty. From these observations, we deduce the following criterion for patch density. 

%\begin{center} (a) $\cal{U}(x_1,\ldots,x_n),\: $ (b) ${\cal V}(y_1),$ \: or   (c) $\cal{U}(x_1,\ldots,x_n) \cap {\cal V}(y) \cap \cdots \cap {\cal V}(y_1) \cap \cdots \cap {\cal V}(y_n),$
%\end{center} 

In the next lemma we collect some properties of patch closure that are needed in later sections.  
 More systematic treatments of Zariski, patch and inverse closure in $\X(F)$ can be found  in \cite{FFL} and \cite{OZR} and their references.    
 
\begin{lemma} \label{list} Let $X$ be a nonempty subset of $\X(F)$.  Then
\begin{enumerate}[$(12)$]
%\item[{\em (1)}]  $\patch(X) \subseteq \inv(X) = \gen(\patch(X))$.% and  $\lim(\patch(X)) = \lim(X)$.  
\item[{\em (1)}] $A(X)  = \bigcap_{V \in \patch(X)}V$ and $J(X) = \bigcap_{V \in \patch(X)}{\ff M}_V.$

\item[{\em (2)}] The set $\patch(X)$ is spectral in the subspace Zariski topology.

%\item[{\em (3)}] If $Z$ is quasicompact, then $\inv(Z) = \gen(Z)$.  
\end{enumerate}
Suppose in addition that $X$ is quasicompact and consists of rank one valuation rings.  
\begin{enumerate}[$(12)$] 
\item[{\em (3)}] 
The set $\patch(X)$ is contained in  $ X \cup \{F\}$. 
%\item[{\em (2)}]  $\lim(Y \cup Z) = \lim(Y) \cup \lim(Z)$ and $\gen(Y \cup Z) = \gen(Y ) \cup \gen(Z)$. 

%\item[{\em (3)}]  $\patch(Z)$ and $\inv(Z)$ are spectral in the Zariski topology. 

%\item[{\em (3)}]  $\patch(Z) $, $\lim(Z)$ and $\inv(Z)$ all  contain minimal and maximal elements with respect to set inclusion.

\item[{\em (4)}]  If $S$ is a multiplicatively closed subset  of $A  = A(X)$, then $A_S = \bigcap_{A_S \subseteq V \in X \cup\{F\}}V$.

\end{enumerate}
\end{lemma}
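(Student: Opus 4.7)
The plan is to handle parts (1)--(4) in sequence, with (3) carrying the main technical content and (4) building on it via a quasicompactness argument on a Zariski-closed subspace of $X$.

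For (1), the inclusion $A(X) \supseteq \bigcap_{V \in \patch(X)} V$ is automatic from $X \subseteq \patch(X)$. For the reverse, suppose $a \in A(X)$ while $a \notin V$ for some $V \in \patch(X)$. Then $V$ lies in the patch-open set $\mathcal{V}(a)$, which must meet $X$ since $V \in \patch(X)$, yielding $U \in X$ with $a \notin U$ and contradicting $a \in A(X)$. The statement for $J(X)$ is analogous, using the patch-open set $\mathcal{V}(a) \cup \mathcal{U}(a^{-1}) = \{W : a \notin \mathfrak{M}_W\}$. Part (2) is the standard fact, due to Hochster, that any patch-closed subset of a spectral space is itself spectral in the subspace Zariski topology, applied to $\patch(X)$, which is patch-closed by construction.

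For (3), I will argue by contradiction: suppose $V \in \patch(X)$ with $V \neq F$ and $V \notin X$. The key observation is that a rank one valuation ring $U$ has only two valuation overrings in $\X(F)$, namely $U$ and $F$, since overrings of a valuation ring are its localizations at primes and a rank one valuation ring has exactly two primes. So for each $U \in X$, the relation $U \subseteq V$ would force $V \in \{U, F\}$, both excluded; hence $U \not\subseteq V$ and I can choose $s_U \in U \setminus V$. The family $\{\mathcal{U}(s_U) : U \in X\}$ is then a Zariski-open cover of $X$, and quasicompactness extracts a finite subcover indexed by $U_1, \ldots, U_k$. The set $W = \mathcal{V}(s_{U_1}) \cap \cdots \cap \mathcal{V}(s_{U_k})$ is patch-open (finite intersection of patch-open basis sets), contains $V$ (each $s_{U_i} \notin V$), and is disjoint from $X$ (each $U \in X$ belongs to some $\mathcal{U}(s_{U_i})$), contradicting $V \in \patch(X) \setminus X$.

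For (4), given $t \in \bigcap_{A_S \subseteq V \in X \cup \{F\}} V$, I would set $Y = X \cap \mathcal{V}(t)$, which is Zariski-closed in $X$ and hence quasicompact. For each $V \in Y$, the failure $A_S \not\subseteq V$ gives $s \in S \cap \mathfrak{M}_V$; since $V$ has rank one, its real-valued valuation satisfies $n v_V(s) > -v_V(t)$ for $n$ large, so $s^n t \in V$ and $V \in \mathcal{U}(s^n t)$. Quasicompactness of $Y$ yields finitely many $s_1, \ldots, s_k \in S$ with $Y \subseteq \bigcup_i \mathcal{U}(s_i t)$; letting $s = \prod_i s_i \in S$, I verify $st \in V$ for every $V \in X$ (on $Y$ because $s_i t \in V$ for some $i$ and $s/s_i \in S \subseteq A \subseteq V$; off $Y$ because $s, t \in V$). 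Hence $st \in A(X) = A$, so $t = st/s \in A_S$.

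The main obstacle lies in (3): everything hinges on the rigidity that a rank one valuation ring has only two valuation overrings, which is what forces $U \not\subseteq V$ for every $U \in X$ and makes the Zariski-open cover $\{\mathcal{U}(s_U)\}$ effective. Without this, one could not produce, via Zariski quasicompactness, the patch-open separation of $V$ from $X$. The same rank one rigidity, now through the archimedean property of the value group, also powers the ``sufficient power of $s$'' step in (4).
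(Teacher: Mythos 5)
Your proposal is correct, and for parts (3) and (4) it takes a more self-contained route than the paper. For (1) and (2) you essentially reproduce what the paper does: the $J(X)$ computation via the patch-open set $\{W : a \notin \mathfrak{M}_W\}$ (which, for $a \neq 0$, is just $\mathcal{U}(a^{-1})$) is the paper's own argument, and (2) is the same citation of the standard fact about patch-closed subsets of spectral spaces. The divergence is in (3) and (4). For (3) the paper simply invokes a general structural result (\cite[Proposition 2.2]{OZR}: if $X$ is quasicompact, every $U \in \patch(X)$ contains some $V \in X$) and then applies rank one rigidity; you instead prove the needed instance from scratch by the covering argument --- choosing $s_U \in U \setminus V$ for each $U \in X$, extracting a finite subcover of $\{\mathcal{U}(s_U)\}$, and exhibiting the patch-open set $\mathcal{V}(s_{U_1}) \cap \cdots \cap \mathcal{V}(s_{U_k})$ separating $V$ from $X$. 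For (4) the paper cites the localization formula $A_S = \bigcap_{A_S \subseteq V \in Y} V$ of \cite[Corollary 5.7]{OZR} and reduces to (3), whereas you give a direct proof exploiting the archimedean property of rank one value groups to produce $s^n t \in V$ and then a second quasicompactness argument on the Zariski-closed set $Y = X \cap \mathcal{V}(t)$ to manufacture a single $s \in S$ with $st \in A$. What the paper's approach buys is brevity and uniformity (the cited results hold without the rank one hypothesis); what yours buys is a self-contained argument that makes visible exactly where rank one and quasicompactness enter. Both are valid; your covering arguments are sound, including the degenerate cases ($Y = \emptyset$, or $X$ a single valuation ring).
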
 

\begin{proof} 
%It is clear that $\patch(Z) \subseteq \inv(Z)$. The rest of 
%  statement (1) follows from \cite[Corollary, p.~45]{Hoc}. 
(1) The first assertion   can be found in \cite[Proposition 4.1]{FFL} or \cite[Proposition 5.6]{OZR}. Since $\patch(X) = X \cup \lim(X)$, to
 see that  the second assertion holds, it suffices to show that $J(X) \subseteq {\ff M}_U$ for each $U \in \lim(X)$.  Let $0 \ne a \in J(X)$, and let $U \in \lim(X)$. If $a \not \in {\ff M}_U$, then $U \in {\cal U}(a^{-1})$. Since $U \in \lim(X)$, there exists $V \in X \cap {\cal U}(a^{-1})$, so that $a^{-1} \in V$. However, $a \in {\ff M}_V$ by the choice of $a$, a contradiction. Thus $J(X) \subseteq {\ff M}_U$, which verifies (1).  

(2) A patch closed subspace of a spectral space is spectral in the subspace topology \cite[Proposition 9.5.29, p.~433]{Gou}. 
%
% The space $\X(F)$ is compact in the patch topology \cite[Theorem 1]{Hoc}, 
 % so the patch closed subset $\patch(Z)$ of $\X(F)$ is compact in the patch topology. Since the patch topology is finer than the Zariski topology, $\patch(Z)$ is  quasicompact in the Zariski topology.
  
  (3) Suppose $X$ is quasicompact. This implies that if $U \in \patch(X)$, then there exists $V \in X$ such that $V \subseteq U$ \cite[Proposition 2.2]{OZR}. Since $X$  consists of rank one valuation rings, we have $V = U$ or $U = F$. Thus $\patch(X) \subseteq  X \cup \{F\}$. 
  %The second assertion of the statement is a consequence of the observation that in a Hausdorff space, the limit points of a set and its closure are the same. 

(4) By \cite[Corollary 5.7]{OZR}, $$A_S \: \: = \: \: \bigcap_{A_S \subseteq V \in Y}V,$$ where $Y$ is the set of all $V \in \X(F)$ such that $V \supseteq U$ for some $U \in \patch(X)$. Thus (4) follows from (3) and the fact that every valuation ring in $X $ has rank one. 
\end{proof}

%\section{Properties of patch limit points}

%In this section we develop some  of the technical properties of patch limit points that are needed in later sections. The first such property

%The next proposition gives an algebraic interpretation of the ring formed by intersecting the patch limit valuation rings of a subspace of $\X(F)$.  This ring will be one of the main objects of study in the paper. 

%\begin{proposition} \label{not empty}
%Let $Z \subseteq \X(F)$. Then 
%$\lim(Z)$ is empty if and only if $Z$ is a finite set.

%\end{proposition}

The following proposition  reinterprets for rank one valuation rings  the property of compactness in the Zariski topology of $\X(F)$ in terms of the patch topology. This enables us to work with the patch topology-- and specifically, patch limit points-- in the algebraic arguments of the next sections. The proposition also shows that 
the Hausdorff condition for a set $X$ of rank one valuation rings is closely connected with the algebraic property that $J(X) \ne 0$.  

\begin{proposition} \label{Haus lemma}  \label{T2} The following statements hold for every  nonempty set $X$ of rank one valuation rings  in $\Zar(F)$.   
 
 \begin{enumerate}[$(12)$] 
 
 \item[$(1)$] $X$ is  compact  if and only if $X$ is patch closed in $\Zar(F)$. 
 
 \item[$(2)$]    If $J(X) \ne 0$, then 
  the Zariski and patch topologies agree on $X$, and hence $X$ is Hausdorff and zero-dimensional.
  
  \item[$(3)$] Suppose  $A(X)$ has quotient field $F$. Then $J(X) \ne 0$ if and only if   $X$ is Hausdorff.  

\end{enumerate}
\end{proposition}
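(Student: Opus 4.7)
My plan is to prove the three parts in the order $(2)$, $(1)$, $(3)$, because the Archimedean argument that establishes $(2)$ is the seed of the arguments in $(1)$ and $(3)$.

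For $(2)$, fix $0 \ne a \in J(X)$. The patch topology on $\Zar(F)$ is generated by the Zariski basic opens $\mathcal{U}(x_1,\ldots,x_n)$ together with the sets $\mathcal{V}(y)$, so to show patch $=$ Zariski on $X$ it suffices to show that $\mathcal{V}(y) \cap X$ is Zariski open in $X$ for each $y \in F$. Given $V \in \mathcal{V}(y) \cap X$, both $v_V(y^{-1})$ and $v_V(a)$ are positive, and because $V$ has rank one its value group embeds in $\mathbb{R}$, so by the Archimedean property there is an integer $k \ge 1$ with $k\, v_V(y^{-1}) > v_V(a)$. Then $y^{-k}/a \in \mathfrak{M}_V \subseteq V$, and for any $W \in \mathcal{U}(y^{-k}/a) \cap X$ the inclusion $a \in J(X) \subseteq \mathfrak{M}_W$ gives $k\, v_W(y^{-1}) \ge v_W(a) > 0$, hence $y \notin W$. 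Thus $V \in \mathcal{U}(y^{-k}/a) \cap X \subseteq \mathcal{V}(y) \cap X$, as required; Hausdorff and zero-dimensional then follow because the patch topology is both.

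For $(1)$, the forward direction goes as follows: if $X$ is compact with $|X| \ge 2$, Hausdorff separation of two distinct points $V, W$ by basic Zariski opens $\mathcal{U}(x_1,\ldots,x_m) \cap X$ and $\mathcal{U}(y_1,\ldots,y_{m'}) \cap X$ gives $\mathcal{U}(x_1,\ldots,x_m, y_1, \ldots, y_{m'}) \cap X = \emptyset$, so $F \notin \lim(X)$, and combining with Lemma~\ref{list}$(3)$ gives $\patch(X) = X$; the case $|X| \le 1$ is immediate. Conversely, assume $X$ is patch closed. Then $X$ is patch compact (closed in the compact patch space $\Zar(F)$), hence Zariski quasicompact; Lemma~\ref{list}$(3)$ and patch closedness then produce finitely many $z_1, \ldots, z_n \in F$ with $X \subseteq \bigcup_{i=1}^n \mathcal{V}(z_i)$. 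To deduce Hausdorffness I generalize the Archimedean step from $(2)$: given $V \in \mathcal{V}(s) \cap X$, I choose for each $j$ an integer $k_j \ge 1$ such that $t_j := s^{-k_j} z_j \in V$; for any $W \in \mathcal{U}(t_1,\ldots,t_n) \cap X$, picking $j$ with $z_j \notin W$ yields $v_W(z_j) \ge k_j\, v_W(s)$ together with $v_W(z_j) < 0$, forcing $v_W(s) < 0$ and hence $s \notin W$. Thus $\mathcal{V}(s) \cap X$ is Zariski open in $X$ for every $s$, so Zariski $=$ patch on $X$ and $X$ is Hausdorff.

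For $(3)$, the forward direction is immediate from $(2)$. For the converse, assume $X$ is Hausdorff with $|X| \ge 2$ (the singleton case being trivial) and $A(X)$ has quotient field $F$. The same disjoint-basic-opens argument as in $(1)$ forward produces a finite tuple $z_1, \ldots, z_n \in F$ with $\mathcal{U}(z_1,\ldots,z_n) \cap X = \emptyset$. Writing each $z_i = a_i/c_i$ with $a_i, c_i \in A(X)$ and $c_i \ne 0$, for any $V \in X$ pick $i$ with $z_i \notin V$; then $a_i, c_i \in V$ and $a_i/c_i \notin V$ force $v_V(c_i) > v_V(a_i) \ge 0$, so $c_i \in \mathfrak{M}_V$, and hence $c := c_1 \cdots c_n \in \mathfrak{M}_V$. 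Thus $0 \ne c \in J(X)$. The main obstacle is the backward direction of $(1)$: the single witnessing element $y^{-k}/a$ from $(2)$ must be upgraded to a tuple $(t_1,\ldots,t_n)$ using the entire cover $X \subseteq \bigcup_i \mathcal{V}(z_i)$, so that an arbitrary $W$ in the resulting basic Zariski open is forced to miss $s$ via whichever $z_j$ it itself misses.
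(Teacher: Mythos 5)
Your proposal is correct, but it reaches the key step --- that the patch and Zariski topologies agree on $X$ --- by a genuinely different route than the paper. The paper handles this step abstractly: it observes that $X$ (resp.\ ${\cal V}(x^{-1})$ for $0 \ne x \in J(X)$) is a patch closed, hence spectral, subspace in which the rank one valuation rings are minimal for the specialization order, and then invokes the Schwartz--Tressl result \cite[Corollary 2.6]{ST} that the patch and Zariski topologies coincide on the set of minimal points of a spectral space. You instead prove directly that each ${\cal V}(s) \cap X$ is Zariski open in $X$ by an explicit Archimedean computation in the value group: in part (2) a single element $a \in J(X)$ supplies the witness $s^{-k}/a$, and in the converse of part (1) you replace $a$ by a finite cover $X \subseteq \bigcup_j {\cal V}(z_j)$ extracted from patch compactness, producing the tuple $t_j = s^{-k_j}z_j$ so that any $W$ containing all $t_j$ is forced to omit $s$ via whichever $z_j$ it omits. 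This is where the rank one hypothesis enters in your argument (the value group embeds in ${\mathbb R}$), whereas in the paper it enters through minimality in the specialization order. Your version is more elementary and self-contained (it avoids both \cite[Corollary 2.6]{ST} and the spectrality of patch closed subspaces), at the cost of being special to the rank one situation; the paper's citation-based argument is shorter and fits the general spectral-space framework used throughout. The remaining pieces --- the forward direction of (1) via Hausdorff separation and Lemma~\ref{list}(3), and the converse of (3) via clearing denominators to produce $0 \ne c \in J(X)$ (you take a product of denominators where the paper takes a common denominator) --- essentially coincide with the paper's proof. The only points to tidy are cosmetic: discard any $z_i = 0$ before writing $z_i = a_i/c_i$, and note that Lemma~\ref{list}(3) is not actually needed to obtain the finite cover in the converse of (1), since $F \notin X$ already because $X$ consists of rank one valuation rings.
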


\begin{proof} 
(1)  Suppose that $X$ is compact. 
 By Lemma~\ref{list}(3),    $\patch(X) \subseteq X \cup \{F\}$, so to show that $X$ is patch closed it suffices to show that $F \not \in \patch(X)$.  
 %Since $\patch(X) = X \cup \lim(X)$ and $F \not \in X$, it is enough
% to show that $F \not \in \lim(X)$.  
 If $X$ consists of a single valuation ring, then $\patch(X) = X$ and  
the claim is clear since this valuation ring must have rank one. Suppose $X$    
contains at least two distinct valuation rings $V$ and $W$.  Since $X$ is Hausdorff, there exist $x_1,\ldots,x_n,y_1,\ldots,y_m \in F$ such that $V \in {\cal U}(x_1,\ldots,x_n)$, $W \in {\cal U}(y_1,\ldots,y_m)$ and $${\cal U}(x_1,\ldots,x_n) \cap {\cal U}(y_1,\ldots,y_m) \cap X = \emptyset.$$ Let $$Y = {\cal V}(x_1,\ldots,x_n) \cup {\cal V}(y_1,\ldots,y_m).$$ Then $X \subseteq Y$ and $Y$ is patch closed, so $\patch(X)  \subseteq Y$. Since $F \not \in Y$, we have $F \not \in \patch(X)$. Therefore, $X$ is patch closed.  

Conversely, suppose that $X$ is patch closed in $\X(F)$. By Lemma~\ref{list}(2), $X$ is a spectral space with respect to the Zariski topology and hence is quasicompact. Since the valuation rings in $X$ have rank one and $F \not \in X$, the elements of $X$ are minimal in $X$  with respect to the specialization order. This implies  the patch and Zariski topologies agree on $X$ \cite[Corollary 2.6]{ST}.  Thus $X$ is Hausdorff since the patch topology is Hausdorff. 

(2)  Let $ 0 \ne x \in J(X)$. Then $X \subseteq {\cal V}(x^{-1})$.   Since the valuation rings in $X$ have rank one and $F \not \in {\cal V}(x^{-1})$, the elements of $X$  are minimal in ${\cal V}(x^{-1})$ with respect to the specialization order induced by the Zariski topology. As a patch closed subset of $\X(F)$,  the set ${\cal V}(x^{-1})$ is, with respect to the Zariski topology, a spectral space (Lemma~\ref{list}(2)). Thus the Zariski and patch topologies agree on the set of elements of ${\cal V}(x^{-1})$ that are minimal with respect to the specialization order \cite[Corollary~2.6]{ST}. Since the patch topology is Hausdorff and zero-dimensional, the last statement of (2) now follows.

% Suppose that $J  \ne 0$, and suppose $U,W$ are distinct valuation rings in $Z$.  Since $U$ and $W$ have rank one, the rings $U$ and $W$ are incomparable. Thus, by {\bf [ref]}, there exists $t \in ({\ff M}_U \cap W) \setminus {\ff M}_W$.  
%Let $0 \ne a \in J$. Since $t \in {\ff M}_U$ and $U$ has rank one, there exists $k>0$ such that $t^k \in aU$. Thus there is $u \in U$ such that $u^{-1}t^k = a \in J \subseteq {\ff M}_W$.  Since $t \not \in {\ff M}_W$ and $W$ is a valuation ring, we have $t^{-1} \in W$, so that $W \in {\cal U}(t^{-1})$. 
%Moreover, $U \not \in {\cal U}(t^{-1})$ since $t \in {\ff M}_U$. Thus ${\cal U}(t^{-1})$ is a Zariski open set containing $W$. %but not $U$.  
%It is clear that  $U \in {\cal U}(u)$. %To see that  $W \not \in {\cal U}(u)$, suppose to the contrary that $u \in W$. Since $a \in {\ff M}_W$, we have then that 
 %$t^k = ua  \in {\ff M}_W$, so that $t \in {\ff M}_W$, contrary to the choice of $t$. Therefore, ${\cal U}(u)$ is a Zariski open set containing $U$ but now $W$. 
% To see that ${\cal U}(t^{-1}) \cap {\cal U}(u) \cap Z = \emptyset$, suppose by way of contradiction that $V \in {\cal U}(t^{-1}) \cap {\cal U}(u) \cap Z$. Then $t^{-k}u \in V$, which is impossible since $(t^{-k}u)^{-1} = a \in J \subseteq {\ff M}_V$. 
%Thus ${\cal U}(t^{-1}) \cap Z$ and ${\cal U}(u) \cap Z$ are disjoint   open sets in $Z$, one containing $W$ and the other containing $U$.  This proves that $Z$ is Hausdorff.

(3) Suppose $A = A(X)$ has quotient field $F$ and  $X$ is Hausdorff. If $X$ consists of a single valuation ring, then it is clear that $J(X) \ne 0$. Assume that $X$ has more than one valuation ring.  Since $X$ is Hausdorff and $A$ has quotient field $F$,   there exist nonzero $a_1,\ldots,a_n, b_1,\ldots,b_m,c \in A$ such that 
%$U \in {\cal U}(a_1/c,\ldots,a_n/c)$, $W \in  {\cal U}(b_1/c,\ldots,b_m/c)$ and   
 $${\cal U}(a_1/c,\ldots,a_n/c) \cap {\cal U}(b_1/c,\ldots,b_m/c) \cap X = \emptyset.$$  
Therefore,  ${\cal U}(1/c) \cap X = \emptyset$. Since each $V \in X$ is a valuation ring, this implies $c \in {\ff M}_V$, so that $0 \ne c \in J(X) $. The converse follows from (2).
\end{proof}

\section{Residually transcendental limit points}

The main result of this section, Theorem~\ref{trans}, is of a technical nature and  involves the existence of patch limit points that are residually transcendental over a local subring of $F$.  
   For the purpose of proving the results in this section,
we recall the notion of a projective model of a field; see \cite[Chapter 6, \S 17]{ZS} for more background on this topic. Let $D$ be a subring of the field $F$, and let $t_0,t_1,\ldots,t_n$ be nonzero elements of $F$. For each $i$, let $D_i =D[t_0/t_i,\ldots,t_n/t_i]$, and let $${\pazocal{M}} \: = \: \bigcup_{i=0}^n \: \{(D_i)_P: P \in \Spec(D_i)\}.$$ Then ${\pazocal{M}}$ is {\it the projective model of $F/D$ defined by $t_0,t_1,\ldots,t_n$.}    Alternatively, a projective model of $F/D$ can be viewed as a projective integral scheme over $\Spec(D)$ whose function field is a subfield of $F$. Motivated by this interpretation, it is often convenient to view ${\pazocal{M}}$ as a locally ringed space; see for example \cite[Section 3]{OZR}.  We do not need to do this explicitly in the present paper, but the notation remains helpful here when we wish to view the local rings in ${\pazocal{M}}$ as points. In particular, viewing the set ${\pazocal{M}}$ as a topological space with respect to the Zariski topology\footnote{The basis for this topology is given by sets of the form $\{R \in {\pazocal{M}}:x_1,\ldots,x_n \in R\}$, where $x_1,\ldots,x_n \in F$.}, the local rings in ${\pazocal{M}}$ are points in ${\pazocal{M}}$.  For this reason, and in keeping with the locally ringed space point of view, we denote a local ring $x \in {\pazocal{M}}$   by $\O_{{\pazocal{M}},x}$, despite the redundancy in doing so.    
A subset $Y$ of ${\pazocal{M}}$ is an {\it affine submodel} of ${\pazocal{M}}$ if there exists a  $D$-subalgebra $R$ of $F$ such that $Y = \{R_P:P \in \Spec(R)\}$. (We differ here from \cite{ZS} in that we do not require $R$ to be a finitely generated $D$-algebra.)   

For each $x \in {\pazocal{M}}$, there exists a valuation ring in $\Zar(F)$ that dominates $\O_{{\pazocal{M}},x}$, and for each $V \in \Zar(F/D)$, there exists a unique $x \in {\pazocal{M}}$ such that $V$ dominates $\O_{{\pazocal{M}},x}$; see \cite[pp.~119--120]{ZS} or apply the valuative criterion for properness \cite[Theorem~4.7, p.~101]{Hart}. 
 For a nonempty subset $X$ of ${\pazocal{M}}$, we denote by ${\pazocal{M}}(X)$ the set of all $x \in {\pazocal{M}}$ such that $\O_{{\pazocal{M}},x}$ is dominated by some $V \in X$.  More formally, ${\pazocal{M}}(X)$ is the image of $X$ under the continuous closed surjection $\X(F) \rightarrow {\pazocal{M}}$ given by the domination mapping \cite[Lemma 5, p.~119]{ZS}.

\begin{lemma} \label{principalization}  Let $X$ be a nonempty subset of $\X(F)$ such that $A=A(X)$ is a local ring. 
Let $D$ be a subring of $A$, let $t_0,\ldots,t_n$ be nonzero elements of $F$, and let ${\pazocal{M}}$ be 
  the projective model of $F/D$ defined by $t_0,\ldots,t_n.$ If ${\pazocal{M}}(X)$ is finite, then $t_0/t_i,\ldots,t_n/t_i \in A$ for some $i = 0,1,\ldots,n$.  \end{lemma}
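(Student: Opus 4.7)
My plan is to argue by contrapositive, using a preorder argument on $\{0,\ldots,n\}$ together with the locality of $A$ to derive a contradiction from an equation $f+g=1$ in which both $f$ and $g$ are non-units of $A$.

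Define a preorder on $\{0,\ldots,n\}$ by $i\preceq j$ iff $t_j/t_i\in A$. The conclusion of the lemma asserts the existence of a minimum for this finite preorder. Assume for contradiction that no minimum exists; then $\preceq$ is not total, so there are incomparable indices $i_1,i_2$, meaning neither $t_{i_2}/t_{i_1}$ nor $t_{i_1}/t_{i_2}$ belongs to $A$. Pick $V_1,V_2\in X$ witnessing this: $v_{V_1}(t_{i_2})<v_{V_1}(t_{i_1})$ and $v_{V_2}(t_{i_1})<v_{V_2}(t_{i_2})$. In particular $t_{i_1}+t_{i_2}\ne 0$, for otherwise $t_{i_1}/t_{i_2}=-1\in D\subseteq A$, contradicting incomparability.

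Set $f=t_{i_1}/(t_{i_1}+t_{i_2})$ and $g=t_{i_2}/(t_{i_1}+t_{i_2})$, so $f+g=1$. A direct case check on $v_V(t_{i_1})$ versus $v_V(t_{i_2})$ shows that, for each $V\in X$, both $f,g\in V$ whenever the two valuations differ, and the same holds when they are equal provided $v_V(t_{i_1}+t_{i_2})=v_V(t_{i_1})$ (``no residue cancellation at $V$''). Granting that $f,g\in A$, inspection of $V_1$ and $V_2$ gives $v_{V_1}(f)>0$ and $v_{V_2}(g)>0$, so $f$ and $g$ are non-units of $A$. Locality of $A$ then forces $f,g\in{\ff m}$, where ${\ff m}$ is the maximal ideal of $A$; but this gives $1=f+g\in{\ff m}$, a contradiction.

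The main obstacle is establishing $f,g\in A$: the only possible failure is residue cancellation at some $V\in X$, namely $v_V(t_{i_1})=v_V(t_{i_2})$ together with $v_V(t_{i_1}+t_{i_2})>v_V(t_{i_1})$. This forces the residue of $t_{i_1}/t_{i_2}$ in $V/{\ff M}_V$ to equal $-1$, and when this residue is well-defined it is the image of the residue in the local ring $\O_{{\pazocal{M}},x_V}$ at the center $x_V\in{\pazocal{M}}(X)$. Since ${\pazocal{M}}(X)$ is finite, only finitely many cancellation conditions can arise, and I would break them all simultaneously by replacing $t_{i_2}$ by $c\,t_{i_2}$ for a suitable unit $c$ (chosen in $D$ or $A$) whose residue at each problematic center avoids the single forbidden value. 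Because $c$ is a unit in every $V\in X$, this perturbation preserves both the incomparability of $i_1,i_2$ and the structural roles of $f,g$; the perturbed pair $f',g'$ then lies in $A$ with $f'+g'=1$, and the argument closes as before.
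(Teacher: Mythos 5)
Your reduction to two incomparable indices $i_1,i_2$ is sound (a finite total preorder has a minimum, so failure of the conclusion yields $t_{i_1}/t_{i_2},\,t_{i_2}/t_{i_1}\notin A$), and the endgame --- $f+g=1$ with $f,g$ nonunits of the local ring $A$ --- would indeed be a contradiction. But the step you yourself flag as the main obstacle, namely $f,g\in A$, is a genuine gap, and the proposed repair does not close it. The claim that ``only finitely many cancellation conditions can arise'' because ${\pazocal{M}}(X)$ is finite is false: the residue of $t_{i_1}/t_{i_2}$ in $V/{\ff M}_V$ is the image of a residue at the center $x_V$ only when $t_{i_1}/t_{i_2}$ is a unit of $\O_{{\pazocal{M}},x_V}$, i.e.\ when $v_V(t_{i_1})=v_V(t_{i_2})$ equals $\min_k v_V(t_k)$. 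If instead $v_V(t_{i_1})=v_V(t_{i_2})>v_V(t_j)$ for the chart index $j$ of the center, then $t_{i_1}/t_j$ and $t_{i_2}/t_j$ both lie in the maximal ideal of $\O_{{\pazocal{M}},x_V}$ and the residue of their ratio is a ``$0/0$'' invisible to the center. Concretely, take $D=k$, $F=k(u,w)$, and for $\lambda\in k^\times$ let $V_\lambda$ be the divisorial valuation obtained by blowing up the origin of $\Spec(k[u,w])$ and then the point $u/w=\lambda$ of the exceptional divisor: every $V_\lambda$ has the same center on the model defined by $1,u,w$, yet the residue of $u/w$ in $V_\lambda/{\ff M}_{V_\lambda}$ is $\lambda$, so infinitely many distinct forbidden values occur over a single center. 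Moreover, even when the forbidden set is finite, a unit $c$ of $A$ avoiding all of them need not exist, since the residues that units of $A$ can realize at a given center may be very restricted (a residue field with two elements leaves no way to avoid $-1$). This is precisely the classical difficulty with the $t/(1+t)$, $1/(1+t)$ device for holomorphy rings, which in Roquette's work is overcome only under extra hypotheses on residue fields.

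The paper's proof sidesteps valuations and residues entirely: a finite subset of a projective model lies in an affine open submodel, so each $\O_{{\pazocal{M}},x}$ with $x\in{\pazocal{M}}(X)$ is a localization of one $D$-algebra $R$; the intersection $B=\bigcap_{x\in{\pazocal{M}}(X)}\O_{{\pazocal{M}},x}$ is local because $A$ is and each $V\in X$ dominates some $\O_{{\pazocal{M}},x}$; hence each $\O_{{\pazocal{M}},x}$ is a localization of $B$, and a local ring that is a \emph{finite} intersection of its own localizations equals one of them, giving $B=\O_{{\pazocal{M}},x}\in{\pazocal{M}}$ and therefore $t_0/t_i,\dots,t_n/t_i\in B\subseteq A$ for the corresponding chart index $i$. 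If you want to preserve the order-theoretic flavor of your attempt, the object to order by domination is the finite set ${\pazocal{M}}(X)$ itself rather than the index set $\{0,\dots,n\}$; that is essentially what the paper's argument does.
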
 

\begin{proof}    Suppose ${\pazocal{M}}(X)$ is finite.  
Each finite subset of the projective model ${\pazocal{M}}$ is contained in an affine open submodel of ${\pazocal{M}}$. (This  is a consequence of homogeneous prime avoidance; see for example  \cite[Lemma 01ZY]{stacks} or the proof of  \cite[Corollary~3.2]{OGeo}). Thus  
${\pazocal{M}}(X)$ is contained in an affine submodel of ${\pazocal{M}}$, and hence there is a $D$-subalgebra $R$ of $F$ such that for each $x \in {\pazocal{M}}(X)$, $\O_{{\pazocal{M}},x}$ is a localization of $R$. 
For each $V \in X$, there is $x \in {\pazocal{M}}(X)$ such that $V$ dominates
 $\O_{{\pazocal{M}},x}$. Since $A$ is a local ring, so is the subring $$B  \:\: = \:\bigcap_{x \in {\pazocal{M}}(X)}\O_{{\pazocal{M}},x}$$ of $A$. (Indeed, if $b \in B$ is a unit in $A$, then since, for each $x \in {\pazocal{M}}(X)$, $\O_{{\pazocal{M}},x}$ is dominated by a valuation ring in $X$, $b$ is a unit in $B$.  Thus if $b,c \in B$ are nonunits in $B$, then $b+c$ is a nonunit in the local ring $A$, so that $b+c$ is a nonunit in $B$.) 
  For each $x \in {\pazocal{M}}(X)$,  since  $\O_{{\pazocal{M}},x}$ is a localization of $R$ and $R \subseteq B \subseteq \O_{{\pazocal{M}},x}$ with $B$ a local ring, we have  that $\O_{{\pazocal{M}},x}$ is a localization of $B$ at a prime ideal of $B$.  
 Since ${\pazocal{M}}(X)$ is finite,  $B$ is a local ring 
that is  a finite intersection of the local rings $\O_{{\pazocal{M}},x}$, $x \in {\pazocal{M}}(X)$, each of which is a localization of $B$ at a prime ideal. It follows that $B$ is equal to one of these localizations; i.e., $B = \O_{{\pazocal{M}},x}$ for some $x \in {\pazocal{M}}(X)$.   Since $B \in {\pazocal{M}}$, there is $i$ such that $t_0/t_i,\ldots,t_n/t_i \in B \subseteq A$.  
%Since $\O_{X,x}$ is in the projective model of $F/A$ defined by $x_0,\ldots,x_n$, there exists $i =0,1,\ldots,n$ such that $x_0/x_i,\ldots,x_n/x_i \in \O_{X,x} = A$.  %This contradicts the choice of $x_0,\ldots,x_n$, so we conclude that $X(X)$ is infinite. 
\end{proof}

%  The next lemma, which is needed in the proof of Theorem~\ref{finally}, is stated more generally than is needed for the sake of the theorem, but this added generality is needed in 

The proof of the next lemma can be streamlined by using the language of schemes and morphisms into the projective line (see \cite{OGeo} for more on this point of view in our context), but in order to make the proof self-contained, we develop the needed ideas in the course of the proof.

\begin{theorem} \label{trans} Let $X$ be a nonempty subset of $\X(F)$ such that $A=A(X)$ is a local ring,  and let $0 \ne t \in F$.  Suppose  $D$ is  a local  subring of $A$,
  $t \not \in A$, $1/t \not \in A$,   
and 
  all but at most finitely many $V \in X$ dominate $D$.  
 Then there exists $U \in \lim(X)$ such that $t,1/t \in U$, $U$ dominates $D$ and  the image of $t$ in $U/{\ff M}_U$ is transcendental over the residue field of $D$. 

\end{theorem}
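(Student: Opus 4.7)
The plan is to produce $U$ as a patch accumulation point of a carefully chosen infinite sequence $\{V_i\} \subseteq X$ whose images in a suitable projective model are distinct closed points lying in the overlap of the two affine charts of the closed fiber. Let ${\pazocal{M}}$ be the projective model of $F/D$ defined by the pair $1, t$, and set $R = D[t,1/t]_{{\ff M}_D D[t,1/t]}$; the ideal ${\ff M}_D D[t,1/t]$ is prime because the quotient $(D/{\ff M}_D)[t, 1/t]$ is a Laurent polynomial ring over a field, hence a domain. Then $R = \O_{{\pazocal{M}}, y}$ is the local ring at the generic point $y$ of the closed fiber of ${\pazocal{M}} \rightarrow \Spec D$, and any valuation ring $U$ dominating $R$ automatically dominates $D$, contains $t$ and $1/t$ as units, and has residue image of $t$ transcendental over $D/{\ff M}_D$---precisely the conclusions required.

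First I would invoke the contrapositive of Lemma~\ref{principalization}: since $A$ is local and neither $t$ nor $1/t$ lies in $A$, the set ${\pazocal{M}}(X)$ is infinite. Setting $X_0 = \{V \in X : V \text{ dominates } D\}$, which is cofinite in $X$ by hypothesis, we still have ${\pazocal{M}}(X_0)$ infinite, and every such point lies in the closed fiber of ${\pazocal{M}} \rightarrow \Spec D$. After removing at most three exceptional points (the generic point $y$, the point at infinity where $1/t \in {\ff M}$, and the point at zero where $t \in {\ff M}$), the remaining set still contains an infinite sequence of distinct closed points $z_1, z_2, \ldots$ corresponding to maximal ideals of $D[t,1/t]$ lying over ${\ff M}_D$. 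For each $i$, pick $V_i \in X_0$ dominating $\O_{{\pazocal{M}}, z_i}$; then $V_i \supseteq D[t,1/t]$, and in particular $t, 1/t \in V_i$ as units.

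The sequence $\{V_i\}$ is an infinite subset of the patch-compact Hausdorff space $\X(F)$, and therefore admits a patch accumulation point $U$. It remains to check $U \in \lim(X)$ and that $U$ dominates $R$. Since each ${\cal U}(s)$ and ${\cal V}(s^{-1})$ is patch clopen, any membership shared by cofinitely many of the $V_i$ is inherited by $U$ (otherwise the complement would be a patch open neighborhood of $U$ meeting only finitely many $V_i$); in particular $t, 1/t \in U$ and ${\ff M}_D \subseteq {\ff M}_U$. The key step is to show that each $f \in D[t,1/t] \setminus {\ff M}_D D[t,1/t]$ is a unit in $U$: its reduction $\bar f \in (D/{\ff M}_D)[t, 1/t]$ is a nonzero element of a one-dimensional PID and so lies in only finitely many maximal ideals; hence $f \not\in z_i$ for cofinitely many $i$, so $1/f \in \O_{{\pazocal{M}}, z_i} \subseteq V_i$ for cofinitely many $i$, and therefore $1/f \in U$. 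Combined, these facts yield ${\ff M}_U \cap D[t,1/t] = {\ff M}_D D[t,1/t]$, so $U$ dominates $R$, which delivers all of the ring-theoretic conclusions. Finally, $U \ne V_i$ for any $i$ since $t$ is residually transcendental in $U$ but residually algebraic in each $V_i$, so $U \in \lim(\{V_i\}) \subseteq \lim(X_0) \subseteq \lim(X)$. The main subtlety is the cofiniteness step, which rests on the one-dimensionality of $(D/{\ff M}_D)[t, 1/t]$.
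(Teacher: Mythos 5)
Your overall architecture is sound and close to the paper's: you use the projective model of $F/D$ defined by $1,t$, invoke the contrapositive of Lemma~\ref{principalization} to get infinitely many centers in the closed fiber, and produce $U$ as a patch limit point sitting over the generic point of that fiber. Your direct construction of $U$ via patch compactness and clopen sets ${\cal U}(s)$, ${\cal V}(s)$ is a self-contained substitute for the paper's appeal to \cite[Lemma 2.7(4)]{OZR}, and the cofiniteness argument using that $(D/{\ff M}_D)[T,T^{-1}]$ is a PID is correct \emph{once the fiber is known to have that form}.

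That, however, is where there is a genuine gap. You assert at the outset that ${\ff M}_D D[t,1/t]$ is prime ``because the quotient $(D/{\ff M}_D)[t,1/t]$ is a Laurent polynomial ring over a field.'' Writing the quotient that way presupposes exactly what must be proved. For an arbitrary local subring $D$ of $A$, the fiber $D[t]/{\ff M}_D D[t]$ is only a quotient of $(D/{\ff M}_D)[T]$, and it can collapse: if $t$ satisfies a relation $\sum_i a_i t^i = 0$ with $a_i \in D$ and some $a_i \notin {\ff M}_D$ (which is compatible with $t$ and $1/t$ not being integral over $D$, hence not in $A$), the reduction is a nontrivial relation and the fiber is a finite scheme or empty, so there is no generic point with residually transcendental $t$ and no infinite supply of closed points. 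This is precisely why the paper spends its first two paragraphs replacing $D$ by the localization $D'$ of the integral closure of $D$ in $D[t]$ and then invoking Seidenberg's Lemma \cite[Exercise 31, pp.~43--44]{Kap}, which needs the hypothesis that $D'$ is integrally closed in $D'[t]$. Your claim does turn out to be true under the theorem's hypotheses, but only via an argument you do not give: since ${\pazocal{M}}(X)$ is infinite and the closed fiber of ${\pazocal{M}}$ is covered by the closed fibers of the two charts $\Spec D[t]$ and $\Spec D[1/t]$, at least one of these fibers is infinite, and a quotient of $(D/{\ff M}_D)[T]$ with infinitely many primes must be all of $(D/{\ff M}_D)[T]$; localizing then gives the Laurent ring on the overlap. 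Either supply that argument (noting it must come \emph{after} the appeal to Lemma~\ref{principalization}, not before) or perform the paper's reduction to $D'$; as written, the first sentence of your proof is circular and the rest of the argument has nothing to stand on.
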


\begin{proof}  We first show that we can replace $D$ with a local subring $D'$ of $A$ such that $D'$ is integrally closed in $D'[t]$ and all but finitely many valuation rings in $X$ dominate $D'$.  
 Let $\overline{D}$ denote the integral closure of $D$ in $D[t]$. Since $A$ in integrally closed in $F$, $\overline{D} \subseteq A$. Let ${\ff m} = M \cap \overline{D}$, where $M$ is the maximal ideal of $A$, and let $D' = \overline{D}_{\ff m}$.  Then $A$ dominates $D'$. 
 Since $\overline{D} \subseteq D[t]$, we have $\overline{D}[t] = D[t]$, and so $\overline{D}$ is integrally closed in $\overline{D}[t]$. Thus $D' = \overline{D}_{\ff m}$ is integrally closed in $D'[t] = \overline{D}_{\ff m}[t]$  since $D'$ and $D'[t]$ are localizations of $\overline{D}$ and $\overline{D}[t]$, respectively, at the same multiplicatively closed set. Therefore, $D'$ is a local ring that is  integrally closed  in $D'[t]$ and is dominated by $A$.      
 %Moreover, since $D[t]$ has quotient field $K(t)$ and $\overline{D} \subseteq D[t] \subseteq K(t)$, it follows that $D' \subseteq K(t)$.  
 
 %If $t$ is transcendental over $D$, then $D' = D$, and hence all but at most finitely $V \in X$ dominate $D'$. If $t$ is algebraic over $D$, then every valuation ring in $Y$ is dominated by only  finitely many valuation rings of the field $K(t)$ {\bf [ref]}.     

To see next that all but finitely many valuation rings in $X$ dominate $D'$, suppose that 
$V \in X$ and $V$ dominates $D$. We claim that $V$ dominates $D'$. Let ${\ff n}$ denote the maximal ideal of $D$, and let ${\ff p} = {\ff M}_V \cap \overline{D}$.  Since $V$ dominates $D$,  ${\ff p}$ lies over ${\ff n}$.  Thus, since $\overline{D}$ is integral over $D$,  ${\ff p}$ is a maximal ideal of  $\overline{D}$. 
If ${\ff p} \ne {\ff m}$, then there exists $d \in {\ff p} \setminus {\ff m}$ so that $1/d \in \overline{D}_{\ff m} = D' \subseteq A \subseteq V$. Since $d \in {\ff M}_V$, this is  a contradiction that implies ${\ff p} = {\ff m}$.  
Thus $V$ dominates $D'$. This shows that every valuation ring in $X$ that dominates $D$ also dominates $D'$. Thus, if $V \in X$ does not dominate $D'$, then $V$ does not dominate $D$.  By assumption, there are at most finitely many valuation rings in $X$ that do not dominate $D$, so there are at most finitely many valuation rings in $X$ that do not dominate $D'$.   With this in mind, we work for the rest of the proof with $D'$ instead of $D$ and draw the desired conclusion for $D$ in the last step of the proof. The advantage of working with $D'$ is  that $D'$ is a local ring that is integrally closed in $D'[t]$.

 Let ${\pazocal{M}}$ be the projective model of $F/D'$ defined by $1,t$.  
Let $f:{\pazocal{M}} \rightarrow \Spec(D')$ be the canonical mapping that sends a local ring $R$ in $ {\pazocal{M}}$ to $D' \cap N$, where $N$ is the maximal ideal of $R$. Let $C = f^{-1}({\ff m}')$, where ${\ff m}'$ is the unique maximal ideal of $D'$.
% that is, $C$ is the set of local rings in ${\pazocal{M}}$ that dominate $D'$.  

 Since $f$ is continuous in the Zariski topology \cite[Lemma 5, p.~119]{ZS},  $C$ is the   closed subset of ${\pazocal{M}}$    consisting of all the local rings in ${\pazocal{M}}$ that dominate $D'$.    Since $D'$ is integrally closed in $D'[t]$ and $t,t^{-1} \not \in D'$ (indeed, by assumption, $t,t^{-1} \not \in A$),  Seidenberg's Lemma \cite[Exercise 31, pp.~43--44]{Kap}  implies that the rings  $D'[t]/{\ff m}D'[t]$ and $D'[t^{-1}]/{\ff m}'D'[t^{-1}]$ are each isomorphic to the polynomial ring $(D'/{\ff m}')[T]$, where $T$ is an indeterminate. These isomorphisms are induced by the mappings $t \mapsto T$ and $t^{-1} \mapsto T$, respectively. 
 Thus $(t,{\ff m}')D'[t]$ is a maximal ideal of $D'[t]$, while ${\ff m}'D'[t]$ is a prime ideal in $D'[t]$ and ${\ff m}'D'[t^{-1}]$ is a prime ideal in $D'[t^{-1}]$.

Now,  since  ${\ff m}'$ extends to a prime ideal in both $D'[t]$ and $D'[t^{-1}]$,  $C$ is irreducible in the Zariski topology with generic point the local ring $D'[t]_{{\ff m}'D'[t]} = D'[t^{-1}]_{{\ff m}'D'[t^{-1}]}$. 
Using the fact   that  the rings  $D'[t]/{\ff m}'D'[t]$ and $D'[t^{-1}]/{\ff m}'D'[t^{-1}]$ are each PIDs isomorphic to the polynomial ring $(D'/{\ff m}')[T]$, it follows that
the  closed points in $C$ are the local rings in the set 
$$\{D'[t]_{P}:P {\mbox{ is a maximal ideal in }} D'[t], {\ff m} \subseteq P\} 
 \cup
\{D'[t^{-1}]_{({\ff m},t^{-1})}\}.$$  
The only point in $C$ that is not  closed is $D'[t]_{{\ff m}'D'[t]}$, the unique generic point of $C$. This accounts for all the local rings in $C$.  The rest of the proof consists in showing that there is a valuation ring $U$ in $\lim(X)$ that dominates $D'[t]_{{\ff m}'D'[t]}$.

To this end, we next describe the local rings in ${\pazocal{M}}(X) \cap C$; i.e., we describe the local rings in $C$ that are dominated by valuation rings in $X$. In particular, we show that there are infinitely many such local rings in $C$ and 
that the Zariski closure of ${\pazocal{M}}(X) \cap C$ in ${\pazocal{M}}$ is  $C$.  

Let $X^*$ be the set of valuation rings in $X$ that dominate $D'$.
We have established that all but finitely many valuation rings in $X$  dominate $D'$; that is, 
%Thus, since every valuation ring in $Y$ extends to a valuation ring in $Z$, it follows that all but at most finitely many valuation rings in $Y$  dominate $D'$; that is, 
$X \setminus X^*$ is finite.  The image ${\pazocal{M}}(X^*)$ of $X^*$ in ${\pazocal{M}}$ under the domination mapping is contained in $C$, so that   $C(X^*) = {\pazocal{M}}(X^*) $. By Lemma~\ref{principalization} the fact that  $A$ is a local ring and  $t$ and $1/t$ are not elements of $ D'$  
implies ${\pazocal{M}}(X)$ is infinite. Since also $X \setminus X^*$ is finite, it follows that ${\pazocal{M}}(X^*) = C(X^*)$ is infinite. Thus, since $C$ consists only of   closed points and a unique generic point for $C$, there are infinitely many   closed points of $C$ in $C(X^*)$, which means there is a subset $X'$ of $X^*$ such that the image $C(X')$ of $X'$ in $C$ is infinite and consists of rings of the form $D'[t]_P$, where $P$ is a maximal ideal of $D'[t]$ that contains ${\ff m}'$. Therefore,  the valuation rings in $X'$ contain $D'[t]$, and the image of $X'$ in $\Spec(D'[t])$ under the map that sends a valuation ring to its center in $D'[t]$ consists of infinitely many maximal ideals of $D'[t]$, all of which contain the dimension one prime ideal ${\ff m}'D'[t]$.  
% are infinitely many valuation rings 
%each  valuation ring in $Z$ either
% \begin{enumerate}
% \item[(a)] contains $D[t]$ and is  centered on a maximal ideal of $D[t]$ that contains ${\ff m}D[t]$,
% \item[(b)]   contains $D[t^{-1}] $ and is centered on the maximal ideal $({\ff m},t^{-1})D[t^{-1}] $ of $D[t^{-1}]$, or
% \item[(c)] contains $D[t]$ and is centered on the nonmaximal prime ideal ${\ff m}D[t]$ of $D[t]$.   
% \end{enumerate}  
%the points of $C$ consist of closed points of $X$ and the generic point $D[t]_{{\ff m}D[t]}$. Since $C(Z)$ is infinite, there exists an infinite subset $Z_1$ of $Z$ such that $C(Z_1)$ is infinite and $C(Z_1)$ consists exclusively of closed points of $X$.  %
%Therefore, 
%Since $C(Z)$ is infinite, there are infinitely many prime ideals of $D[t]$ that contain ${\ff m}D[t]$ and have a valuation ring in $Z$ centered on them.
 Since $D'[t]/{\ff m}'D'[t]$ is a PID, the intersection of these infinitely many maximal ideal  is ${\ff m}'D'[t]$.

Next, to see how this fact is reflected in $C$, let ${\pazocal{M}}_1$ be the affine submodel of ${\pazocal{M}}$ given by $${\pazocal{M}}_1 = \{D'[t]_P:P \in \Spec(D'[t])\}.$$  Let $g:{\pazocal{M}}_1
 \rightarrow \Spec(D'[t])$ denote the homeomorphism that sends  a local ring $D'[t]_P$ in ${\pazocal{M}}_1$ to its center in $D'[t]$. We have shown that the Zariski closure of $C(X')$ in $\Spec(D'[t])$ is the set of all prime ideals of $D'[t]$ containing the prime ideal ${\ff m}'D'[t]$.    Since $g$ is a homeomorphism, it follows  that $D'[t]_{{\ff m}'D'[t]}$ is in the Zariski closure of $C(X')$. Hence the Zariski closure of $C(X')$ in ${\pazocal{M}}$ is $C$.

 Now, since  the generic point of the   closed set $C$ is the local ring $D'[t]_{{\ff m}'D'[t]}$ and $C(X')$ is infinite and  Zariski dense in $C$, we apply \cite[Lemma 2.7(4)]{OZR} to obtain that
 there 
 is a valuation ring $U$  in   the patch closure of $X'$   
 that dominates $$D'[t]_{{\ff m}'D'[t]} = D'[1/t]_{{\ff m}'D'[1/t]}.$$ 
  Since $$D'[t]/{\ff m}'D'[t] \cong (D'/{\ff m}')[T],$$ the 
  image of $t$ in the residue field of $U$  is transcendental over the residue field of $D'$. Since all the valuation rings in $X'$ are centered  on maximal ideals of $D'[t]$, $U$ is not a member of $X'$. Therefore, since $U$ is in the patch closure of $X'$ but  not in $X'$, it must be that $U \in \lim(X') \subseteq \lim(X)$. Finally, 
 since $D'$ dominates $D$, we conclude that the image of  $t$ in $U/{\ff M}_U$ is  transcendental  over the residue field of $D$, 
  which completes the proof of the theorem.
%  By Proposition~\ref{closed map}, $f(\patch(Z))$ is a patch closed subset of the Zariski-Riemann space of $K(t)$. Thus $\patch(Y) \subseteq f(\patch(Z))$, and there is $U' \in \lim(Z)$ such that $U = f(U') = U' \cap K(t)$. Since $U'$ dominates $U$, we have that $t,1/t \in U'$, $U'$ dominates $D$ and the image of $t$ in $U'/{\ff M}_{U'}$ is transcendental over the residue field of $D$.   
\end{proof}

\section{Limit points of rank greater than one}

We show in Theorem~\ref{ugh} that if $X \subseteq \X(F)$ and $ A(X)$ is local but not a valuation domain, then there is a patch limit valuation ring  of $X$ of rank $>1$. This is the key result  needed in the next section to prove the main results of the paper.  
The proof of Theorem~\ref{ugh} relies on the following lemma, which gives a criterion for the existence of a  valuation ring of rank $>1$ to lie in the patch closure of $X$.   

\begin{lemma} \label{step} Let $A$ be a local integrally closed   subring of $F$ with maximal ideal $M$, and let $X$ be a patch closed subset of $\X(F)$ such that $A = A(X)$.  Suppose that there exist $0 \ne t \in F$ and $m \in M$ such that $mt \not \in A$ and $t^{-1} \not \in A$. If for each $i>0$ there exists $V_i \in X$ such that $m \in {\ff M}_{V_i}$  and $mt^i$ is a unit in $V_i$, then $X$ contains a valuation ring of rank $>1$. 
\end{lemma}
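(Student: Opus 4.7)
My plan is to extract a patch accumulation point $U \in X$ of the sequence $(V_i)_{i>0}$ and show that the stated hypotheses force $U$ to have rank strictly greater than one. First, note that $m \ne 0$: otherwise $mt = 0 \in A$, contradicting $mt \notin A$. Since $X$ is patch closed in the patch-quasicompact space $\Zar(F)$, $X$ is itself patch quasicompact, and a standard finite-intersection argument applied to the patch closures of the tails $\{V_i : i \geq N\}$ produces a point $U \in X$ with the property that every patch open neighborhood of $U$ contains $V_i$ for arbitrarily large $i$.

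Next I would locate $m$ and $t^{-1}$ inside $U$. Since $m \in {\ff M}_{V_i}$ for every $i$, each $V_i$ lies in the patch-clopen set ${\cal V}(m^{-1})$, so $U \in {\cal V}(m^{-1})$, which forces $m \in {\ff M}_U$. The fact that $mt^i$ is a unit in $V_i$ combined with $v_{V_i}(m)>0$ yields $v_{V_i}(t) = -v_{V_i}(m)/i < 0$, so $V_i \in {\cal V}(t)$; the same passage to the accumulation point gives $t \notin U$, whence $t^{-1} \in {\ff M}_U$ (and in particular $U \ne F$).

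For the final step, suppose toward a contradiction that $U$ has rank one, so its value group embeds order-preservingly in $\mathbb{R}$ and is archimedean. Since $v_U(m)$ and $v_U(t^{-1})$ are both positive, I can choose an integer $i_0$ large enough that $i_0 \cdot v_U(t^{-1}) > v_U(m)$; this rearranges to $v_U(mt^{i_0}) < 0$, so $mt^{i_0} \notin U$, i.e., $U \in {\cal V}(mt^{i_0})$. Since ${\cal V}(mt^{i_0})$ is patch open and $U$ is an accumulation point of $(V_i)$, this set must contain $V_j$ for some $j > i_0$. However, a direct value computation gives $v_{V_j}(mt^{i_0}) = v_{V_j}(m)(1 - i_0/j) > 0$, so $mt^{i_0} \in {\ff M}_{V_j} \subseteq V_j$, contradicting $V_j \in {\cal V}(mt^{i_0})$. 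Hence $U$ has rank greater than one, as desired.

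The main obstacle is orchestrating the archimedean argument: one must choose a patch open neighborhood of $U$ that simultaneously witnesses the failure of the rank-one hypothesis and excludes the tail terms $V_j$ with $j > i_0$. The choice ${\cal V}(mt^{i_0})$ with $i_0$ large (relative to $v_U(m)/v_U(t^{-1})$) accomplishes both, and the tension between these two requirements -- untenable under rank one yet built into the accumulation point $U$ -- is precisely what delivers the contradiction. The hypotheses $mt, t^{-1} \notin A$ are otherwise largely invisible in the body of the argument, serving mainly to guarantee $m \ne 0$ and the consistency of the setup.
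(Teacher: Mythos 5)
Your proof is correct and follows essentially the same route as the paper's: both use patch quasicompactness of $X$ and the finite intersection property to extract a limit valuation ring $U$ with $m, t^{-1} \in {\ff M}_U$ and (in effect) $mt^i \in U$ for all $i$, and both derive the contradiction with rank one from the archimedean property of the value group. The only cosmetic differences are that the paper intersects the explicit patch closed sets ${\cal U}(mt^i)\cap{\cal V}(t)\cap X$ rather than tail closures, and phrases the final contradiction via the radical of $mU$ inside $U$ instead of routing it back through a tail element $V_j$; your closing observation that the hypotheses $mt, t^{-1}\notin A$ are not really used inside this lemma (the paper derives $mt^i,(mt^i)^{-1}\notin A$ there mainly for reuse elsewhere) is also accurate.
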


\begin{proof} 
 Let $i > 0$. If $mt^i \in A$, then $(mt)^i \in A$, so that since $A$ is integrally closed we have  $mt \in A$, contrary to assumption. Thus $mt^i \not \in A$ for all $i>0$.  Moreover,  
  $(mt^{i})^{-1} \not \in A$, since otherwise $t^{-i} = m(m^{-1}t^{-i}) \in A$, which since $A$ is integrally closed forces $t^{-1} \in A$,  contrary to the choice of $t$.   
  By assumption, there exists $V_i \in X$ such that $mt^i$ is a unit in $V_i$ and $m \in {\ff M}_{V_i}$.   
 If $t \in V_i$, then since $mt^i$ is a unit in $V_i$ it is the case that $m$ is a unit in $V_i$ and an element of  ${\ff M}_{V_i}$, a contradiction. Thus  $t \not \in V_i$.  Using Notation~\ref{nota}, let
 $${\cal C}_i= {\cal U}(mt^i) \cap {\cal V}(t) \cap X.$$  
  Then
$V_i \in {\cal C}_i$, so that  ${\cal C}_i$ is a nonempty patch closed subset of $X$.    

  We use compactness to show that $\bigcap_{i>0}{\cal C}_i$ is nonempty.  To this end, we claim that the collection $\{{\cal C}_i:i>0\}$ has the finite intersection property. 
 % 
 % We next use the collection $\{U_i:i>0\}$ to find a limit point of $X$ that has rank $>1$.
  %By Proposition~\ref{dominate}, $U_i$ dominates $A$, so that $m \in {\ff M}_{U_i}$. Thus, since $mt^i $ is a unit in $U_i$, it must be that $t \not \in U_i$.  In summary, $mt^i \in U_i$ and $t \not \in U_i$, from which we conclude that 
%$m \in t^{-i}U_i$ and $t^{-1} \in {\ff M}_{U_i}$; i.e., 
%$U_i \in {\cal U}(mt^i) \cap {\cal V}(t) \cap \lim(X)$. Moreover, 
Let $i>0$, and let $0 < j < i$. Then $$V_i \in {\cal C}_j =  {\cal U}(mt^j) \cap {\cal V}(t) \cap X$$
 since $mt^{i} \in V_{i}$ and $t^{-1} \in V_{i}$ implies $mt^j = mt^{i}(t^{-1})^{i-j} \in V_{i}$. 
 For each $i>0$, it follows that ${\cal C}_1 \cap {\cal C}_2 \cap \cdots \cap {\cal C}_i$ contains the valuation ring $V_{i}$. 
Therefore, the collection  $\{ {\cal C}_i:i>0\}$  of patch closed subsets of $X$
 has the finite intersection property. Since $X$ is a patch closed subset of the patch quasicompact space $\X(F)$, $X$ is patch quasicompact. Thus the set $$ \bigcap_{i>0} {\cal C}_i \: =\:  X  \cap {\cal V}(t) \cap  \left(\bigcap_{i>0}{\cal U}(mt^i)\right)$$ is nonempty. Let $U$ be a valuation ring in this intersection. Then $U \in X$, and, for each $i>0$, we have $0 \ne m \in (t^{-1})^iU$. Also, since $t \not \in U$, we have $t^{-1} \in {\ff M}_U$. Thus $m \in (t^{-1})^iU \subseteq {\ff M}_U$.  
   If $U$ has rank $1$, then the radical of $mU$ in $U$ is ${\ff M}_U$.  In this case there exists $n>0$ such that $$(t^{-1})^n \in mU \subseteq (t^{-1})^{n+1}U,$$ a contradiction to the fact that $t^{-1}$ is in $U$ but is not a unit in $U$. We conclude that $U$ has rank $>1$.   
\end{proof}

In the proof of Theorem~\ref{ugh} we pass to a subfield $K$ of $F$. In doing so, we need that features of the topology of $X$ are preserved in the image of $X$ in the Zariski-Riemann space of $K$. This is given by the next lemma.

\begin{lemma} \label{closed map} Let $K$ be a subfield of $F$. Then the mapping $f:\X(F) \rightarrow \X(K):V \mapsto V \cap K$ is a surjective map that is closed and continuous in the patch topology.
\end{lemma}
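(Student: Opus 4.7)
The plan is to verify the three claims -- surjectivity, patch continuity, and patch closedness -- in turn, with the first two being essentially direct and the third following by a soft compactness argument.

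For surjectivity, I would invoke the standard extension theorem for valuations: every valuation ring $W$ of $K$ admits an extension to a valuation ring of $F$. Concretely, given $W \in \X(K)$ with maximal ideal ${\ff M}_W$, I would apply Zorn's lemma to the set of pairs $(R, P)$ where $R$ is a subring of $F$ containing $W$ and $P$ is a prime ideal of $R$ with $P \cap W = {\ff M}_W$, ordered by $(R,P) \le (R',P')$ iff $R \subseteq R'$ and $P = P' \cap R$. A maximal element yields a valuation ring $V$ of $F$ whose maximal ideal ${\ff M}_V$ satisfies ${\ff M}_V \cap W = {\ff M}_W$. Then $V \cap K$ is a valuation ring of $K$ containing $W$, and its maximal ideal ${\ff M}_V \cap K$ contracts to ${\ff M}_W$ in $W$; since a valuation overring is a localization at a prime of $W$, this forces $V \cap K = W$.

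For patch continuity, it suffices to check that preimages of the subbasic patch open sets ${\cal U}(x)$ and ${\cal V}(x)$ of $\Zar(K)$ ($x \in K$) are patch open in $\Zar(F)$. For $x \in K$ and $V \in \Zar(F)$, the condition $x \in V \cap K$ is equivalent to $x \in V$, and similarly for the complement. Thus
\begin{equation*}
f^{-1}({\cal U}_K(x)) = {\cal U}_F(x), \qquad f^{-1}({\cal V}_K(x)) = {\cal V}_F(x),
\end{equation*}
both of which are patch open (indeed, patch clopen) subsets of $\Zar(F)$, where I write ${\cal U}_K, {\cal V}_K$ to emphasize that these sets live in $\Zar(K)$.

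For patch closedness, I would appeal to the standard fact that a continuous map from a quasicompact space to a Hausdorff space is closed. Recall from Section 2 that the patch topology on $\Zar(F)$ is both quasicompact and Hausdorff, so patch closed subsets of $\Zar(F)$ are patch quasicompact. Their images under the patch continuous map $f$ are therefore patch quasicompact subsets of the Hausdorff space $\Zar(K)$, hence patch closed.

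The ``hard'' step here is really only the surjectivity, and even that is classical; the patch topological statements are almost tautological once one writes out the preimages of the subbasic sets and invokes the compact-to-Hausdorff principle.
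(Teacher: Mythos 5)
Your proposal is correct and follows essentially the same route as the paper: the paper cites the Chevalley Extension Theorem for surjectivity (which is exactly the Zorn's lemma argument you sketch), verifies patch continuity on the same subbasic sets, and deduces closedness from the fact that a patch continuous map between patch compact Hausdorff spaces is closed. No gaps.
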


\begin{proof}
 That $f$ is surjective follows from the Chevalley Extension Theorem \cite[Theorem 3.1.1, p.~57]{EP}. 
%To see that $f$ is continuous in the patch topology, let $x_1,\ldots,x_n,y \in K$. Then \begin{center} $f^{-1}(\{V \in {\ff Y}:x_1,\ldots,x_n \in V\}) = {\cal U}(x_1,\ldots,x_n)$ and $f^{-1}(\{V \in {\ff Y}:y \not \in V\}) = {\cal V}(y)$.\end{center}  
To see that $f$ is continuous in the patch topology observe that 
for $x_1,\ldots,x_n,y \in K$,  
the preimages under $f$ of the subbasic patch open sets 
\begin{center} $\{V \in \X(K):x_1,\ldots,x_n \in V\}$ \: and \: $\{V \in \X(K):y \not \in V\}$ \end{center} are ${\cal U}(x_1,\ldots,x_n)$ and ${\cal V}(y)$, respectively, and hence are patch open in $\X(F)$.  
%
%Since the sets of the form ${\cal U}=\{V \in {\cal Y}:x_1,\ldots,x_n \in V, y_1,\ldots,y_m \in {\ff M}_V\}$, where $x_1,\ldots,x_n,y_1,\ldots,y_m \in K$, form a basis for the patch topology on ${\ff Y}$, 
%and $$
%f^{-1}({\cal U}) = {\cal U}(x_1,\ldots,x_n)\cap {\cal V}(y_1) \cap \cdots \cap {\cal V}(y_m),$$
%it follows that $f$ is continuous. 
Finally,  with the patch topologies on $\X(F)$ and $\X(K)$,  $f$ is a continuous map between compact Hausdorff spaces, and so $f$ is closed \cite[Theorem~3.1.12, p.~125]{Eng}.
\end{proof}

\begin{theorem} \label{ugh} Let $ X$ be a nonempty subset of  $\X(F)$ such that $J(X) \ne 0$. If  $A(X)$ is a local ring that is not a valuation domain, then 
 $\patch(X)$ contains a valuation ring of rank $>1$. 
\end{theorem}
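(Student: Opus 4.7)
We first reduce to the case $X$ is patch closed by replacing $X$ with $\patch(X)$: this preserves both $A(X)$ and $J(X)$ by Lemma \ref{list}(1), and $\patch(\patch(X)) = \patch(X)$. Under this reduction $\lim(X) \subseteq X$, and it suffices to verify the hypotheses of Lemma \ref{step} for $X$: its conclusion will then directly produce the required rank $>1$ valuation ring.

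Fix a nonzero $m \in J(X) \subseteq M$, and, using that $A$ is not a valuation domain, fix $x \in F$ with $x, x^{-1} \notin A$. We may further assume that every valuation ring in $X$ has rank one, otherwise the conclusion is immediate. Consequently we can choose rank-one valuation rings $V_\ast, W_\ast \in X$ with $v_{V_\ast}(x) < 0 < v_{W_\ast}(x)$. Take a positive integer $n$ satisfying $n\, v_{W_\ast}(x) > v_{W_\ast}(m)$, and set $t := x^n/m$. A direct valuation computation shows $t, t^{-1}, mt \notin A$ (using $mt = x^n$ together with integral closedness of $A$), and moreover that for every $i \geq 1$ the element $s_i := mt^i = x^{ni}/m^{i-1}$ satisfies $v_{V_\ast}(s_i) < 0$ and $v_{W_\ast}(s_i) > 0$; in particular $s_i \notin A$ and $s_i^{-1} \notin A$.

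For each $i \geq 1$ we apply Theorem \ref{trans} with $s_i$ in place of $t$, together with a fixed local subring $D \subseteq A$ dominated by cofinitely many $V \in X$; such a $D$ can be constructed as a suitable localization of $\mathbb{Z}[m]$ (or $\mathbb{F}_p[m]$) at a prime containing $m$, using that $m \in J(X) \subseteq \ff{M}_V$ for every $V \in X$. Theorem \ref{trans} produces a patch limit point $U_i \in \lim(X)$ with $s_i, s_i^{-1} \in U_i$, i.e., with $mt^i$ a unit in $U_i$. Since $X$ is patch closed, $U_i \in X$, and setting $V_i := U_i$ gives $V_i \in X$ with $mt^i$ a unit in $V_i$ and $m \in \ff{M}_{V_i}$ (the latter automatic from $m \in J(X)$). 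Lemma \ref{step} then applies and yields the desired rank $>1$ valuation ring in $X$. The main obstacle is the construction of $t = x^n/m$ for $n$ sufficiently large, since this single choice of $t$ must simultaneously arrange $t, t^{-1}, mt \notin A$ and control $s_i$ for all $i \geq 1$; once this is done, the $V_i$ are produced by a direct application of Theorem \ref{trans}.
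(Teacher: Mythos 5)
Your construction of $t = x^n/m$ is correct and clever: the valuation computations at $V_\ast$ and $W_\ast$ do give $t, t^{-1}, mt \notin A$ and $s_i, s_i^{-1} \notin A$ for all $i$, and they feed Lemma~\ref{step} exactly as intended. The reduction to $X = \patch(X)$ and to the case where every member of $X$ has rank one is also fine. The gap is the one sentence you pass over most quickly: the existence of a local subring $D$ of $A$ dominated by all but finitely many $V \in X$. Knowing $m \in J(X) \subseteq {\ff M}_V$ only gives $m \in {\ff M}_V \cap D$; domination requires ${\ff M}_V \cap D$ to equal the maximal ideal of $D$. Your $D$ does work in equal characteristic, where the only prime of $k_0[m]$ containing $m$ (for $k_0$ the prime field, $m$ necessarily transcendental over $k_0$) is $(m)$ itself, so every $V \in X$ dominates $k_0[m]_{(m)}$. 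But in mixed characteristic the step fails, and not just for your particular $D$: if a prime number $p$ lies in $M$, then every local subring $D$ of $A$ has $p$ in its maximal ideal (otherwise $1/p \in D \subseteq A$), so $V$ can dominate $D$ only if $p \in {\ff M}_V$. The hypothesis $J(X) \ne 0$ does not force the valuation rings in $X$ to dominate $A$, and $X$ may contain infinitely many $V$ with $p \notin {\ff M}_V$ alongside others with $p \in {\ff M}_V$ (concretely, ${\ff M}_V \cap \mathbb{Z}[m]$ can be $(m)$ for infinitely many $V$ and $(p,m)$ for others). In that situation the domination hypothesis of Theorem~\ref{trans} fails for \emph{every} choice of $D$, and your argument cannot start.

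This is precisely the difficulty the paper's proof is built around. Its Claim 3 is essentially your argument in the case that every $V \in X$ dominates $A$, where one may take $D = A$ (the paper produces $m$ and $t$ via $\End(M) = A$ rather than via your rank-one value-group trick, which is arguably less elegant but avoids assuming rank one). When some $V$ do not dominate $A$, the paper constructs a two-dimensional Noetherian local $D$ containing an element of $M \setminus J(X)$ and an element of $J(X)$ (Claim 4) and then contracts everything to the quotient field $K$ of $D$ via Lemma~\ref{closed map}; the whole point of the contraction is Claim 5(a): although infinitely many $V \in X$ may fail to dominate $D$, only finitely many of the contractions $V \cap K$ do, since those contractions all contain the integral closure of a semilocal one-dimensional Noetherian ring. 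Some device of this kind, or an explicit restriction to the equal-characteristic case, is needed to close the gap; as written, the proposal proves the theorem only when $A$ contains a field or has positive characteristic.
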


\begin{proof} Let $A = A(X)$, $J = J(X)$, and let 
 $M$ denote the unique maximal ideal of $A$.  
We prove the lemma by establishing  a series of  claims.  In the proof, for an ideal $I$ of $A$, we denote by $\End(I)$ the subring of $F$ given by $\{t \in F:tI \subseteq I\}$. Thus $\End(I)$ is the largest ring in $F$ in which $I$ is an ideal.  

\medskip

{\textsc{Claim 1.}} If $A$ is not completely integrally closed\footnote{A domain $R$ is completely integrally closed if for each nonzero ideal $I$ of $R$ and each element $t$ of the quotient field of $R$, $tI \subseteq I$ if and only if $t \in R$; equivalently, $R = \End(I)$ for each nonzero ideal $I$ of $R$.}, then $\patch(X)$ contains a valuation ring of rank $>1$.   

\medskip 

{\it Proof of Claim 1.} If every valuation ring in $\patch(X)$ has rank $\leq 1$, then $A$, as an intersection of completely integrally closed domains, is completely integrally closed. \qed
%Let $Z_1$ be the set of  valuation rings in $Z$ of rank at most $1$. Then $\End(MV) = V$ for every $V \in Z_1$, so that $\End(M) \subseteq \bigcap_{V \in Z_1}\End(MV) = \bigcap_{V \in Z_1}V$. Thus, if $\End(M) \ne A$, then $\bigcap_{V \in Z_1}V \ne A$, so that $Z \setminus Z_1$ is nonempty; i.e., $Z$ contains a valuation ring of rank $>1$.

\medskip

{\textsc{Claim 2.}} If $M$ is a principal ideal of $A$, then $\patch(X)$ contains a valuation ring of rank~$>1$.   

\medskip

{\it Proof of Claim 2.} 
 Suppose  $M = mA$ for some $m \in M$. Since
 $A$ is not a valuation domain, $A$ is not field, and hence $M \ne 0$.   
  Since $M$ is principal, the ideal  $P = \bigcap_{i>0}M^i$ is the unique largest nonmaximal prime ideal of $A$ and $PA_P  = P$ \cite[Exercise 1.5, p. 7]{Kap}. If $P = 0$, then $A$ is valuation ring (in fact, a DVR), contrary to assumption.  
 Thus $P \ne 0$, and, since $P = PA_P$, it follows that $A_P \subseteq \End(P)$.  
 Since $P$ is a nonmaximal prime ideal, this implies $A \subsetneq \End(P)$, so that $A$ is not completely integrally closed. By Claim 1, 
  $\patch(X)$ contains a valuation ring of rank $>1$, which proves Claim~2.   \qed
%This shows that if $M$ is a principal ideal of $A$, then $\patch(Z)$ contains a valuation ring of rank $>1$.  

\medskip

{\textsc{Claim 3.}} If all the valuation rings  in $X$ dominate $A$, then $\patch(X)$ contains a valuation ring of rank $>1$.    

\medskip 

{\it Proof of Claim 3.}
If $M$ is a principal ideal of $A$,   Claim 2 implies that $\patch(X)$ contains a valuation ring of rank $>1$, and the proof of the claim is complete. Thus we assume $M$ is not a principal ideal of $A$. Also, by Claim 1, we may assume that $A$ is completely integrally closed and hence $\End(M) = A$.  
%Let $X_1$ be the set of  valuation rings in $X$ of rank at most $1$. Then $\End(MV) = V$ for every $V \in X_1$, so that $\End(M) \subseteq \bigcap_{V \in X_1}\End(MV) = \bigcap_{V \in X_1}V$. Thus, if $\End(M) \ne A$, then $\bigcap_{V \in X_1}V \ne A$, so that $X \setminus X_1$ is nonempty; i.e., $X$ contains a valuation ring of rank $>1$. 
It remains to prove Claim~3 in the case in which $\End(M) = A$, $M$ is not a principal ideal of $A$ and all the  valuation rings  in $X$ dominate $A$.

 Since $A$ is not a valuation ring, there exists $t \in F$ such that $t \not \in A$ and $t^{-1} \not \in A$.  
%We claim  that $MA[t] \not \subseteq A$. 
Since $M$ is not an invertible ideal of $A$, we have $$M \subseteq M(A:_FM) \subsetneq A,$$ which forces $M = M(A:_FM)$. Thus $$(A:_FM) = \End(M)  =A,$$ so  $t \not \in A = (A:_FM)$. Let $m \in M$ such that $mt \not \in A$. Since $t^{-1},mt \not \in A$, we have $mt^i, (mt^i)^{-1} \not \in A$ for each $i>0$, as noted at the beginning of the proof of Lemma~\ref{step}.  
By Theorem~\ref{trans} (applied in the case where ``$D$'' is $A$), there exists, for each $i>0$, a valuation ring $V_i \in \patch(X)$ that dominates $A$ and for which $mt^i$ is a unit in $V_i$.
By assumption, every valuation ring in $X$ dominates $A$, so Lemma~\ref{list}(1) implies that every valuation ring in $\patch(X)$ dominates $A$. Thus $m \in {\ff M}_{V_i}$ for all $i$. 
  By Lemma~\ref{step}, $\patch(X)$ contains a valuation ring of rank $>1$, which completes the proof of Claim 3. \qed

\medskip

{\textsc{Claim}} 4.  {If  there are valuation rings in $X$ that do not dominate $A$, then there exists a two-dimensional Noetherian local subring $D$ of $A$ such that $D$ is dominated by $A$ and $D$ contains nonzero elements $a,b$ with $a \in M \setminus J$ and $b \in J$. }

\medskip

{\it Proof of Claim 4.}
Since  there are valuation rings in $X$ that do not dominate $A$, we have $0 \ne J \subsetneq M$.  
To prove the existence of the ring $D$, suppose first  $A$ contains a field $k$.  
In this case, choose $a \in M \setminus J$ and $0 \ne b \in J$. Let $C = k[a,b]$ and let $D = C_{M \cap C}$. Then $D$ is a Noetherian local subring of $A$ that is dominated by $A$. Since $C$ is generated by two elements over a field, $D$ has Krull dimension at most two. To see that $D$ has exactly Krull dimension two, observe that  
 since $a \in M \setminus J$ there is $V \in X$ such that $a \not \in {\ff M}_V$. Thus $0 \ne b \in {\ff M}_V \cap D \subsetneq M \cap D$, so that $D$  has Krull dimension two. 
 This verifies Claim 4 if $A$ contains a field.

Suppose next that  $A$ does not contain a field.   
Then the contraction of $M$ to the prime subring of $A$  is a nonzero principal ideal, and hence there is a DVR $W$ that is dominated by $A$. Let $p$ be the generator of the maximal ideal of $W$. 

If $p \in J$, then let $b = p$ and choose $a \in M \setminus J$. 

If $p \not \in J$, then let $a = p$ and choose $0 \ne b \in J$. 

In either case, we have $a \in M \setminus J$ and $0 \ne b \in J$.   Let $C = W[a,b]$, and let $D = C_{M \cap C}$.  Since $W$ is a DVR and either $a \in W$ or $b \in W$, the ring $D$ has Krull dimension at most two. As in the case in which $A$ contains a field, since $a \in M \setminus J$ and $J \ne 0$, it follows that $D$ has Krull dimension two. This verifies Claim 4. 
\qed

%\medskip

%We have established the existence of a two-dimensional local Noetherian subring $D$ of $A$ such that $D$ is dominated by $A$ and $D$ contains nonzero elements $a,b$ such that $a \in M \setminus J$ and $b \in J$.  The next step of the proof involves a reduction that allows us to work in the quotient field of $D$.  

\medskip

{\textsc{Claim 5.}} The set $\patch(X)$ contains a valuation ring of rank $>1$.    

\medskip 

{\it Proof of Claim 5.}
If every valuation ring in $X$ dominates $A$, then, by Claim 3, $\patch(X)$ contains a valuation ring of rank $>1$. It remains to consider the case where there are valuation rings in $X$ that do not dominate $A$.  
By Claim 4, there is 
a two-dimensional local Noetherian subring $D$ of $A$ such that $D$ is dominated by $A$ and $D$ contains nonzero elements $a,b$ such that $a \in M \setminus J$ and $b \in J$.  The next step of the proof involves a reduction that allows us to work in the quotient field of $D$.

Let $K$ denote the quotient field of $D$. Let
\begin{center}
 $A' = A \cap K$,
\:\: $M' = M \cap K$,  \:\: and \:\:  
 $X' = \{V \cap K:V \in \patch(X)\}.$
 \end{center}
    Then $A'$ is a local integrally closed overring\footnote{By an {\it overring} of a domain, we mean a ring between the domain and its quotient field.} 
 of $D$ with maximal ideal $M'$ and  $A' = \bigcap_{V \in X'}V$. Also, by Lemma~\ref{closed map}, $X'$ is patch closed in $\Zar(K)$.  
    % Since $a \in (A' \cap M) \setminus (A' \cap J)$ and $0 \ne b \in A' \cap J$, there is a valuation ring $V \in X'$ such that ${\ff M}_{V} \cap A' $ is a nonzero prime ideal of $A'$ contained in $M'$.  

\medskip

{\textsc{Claim 5}}(a). All but finitely many valuation rings in $X'$ dominate $D$.     

\medskip

 {\it Proof of Claim 5(a)}. Suppose that $V \in X'$ and $V$ does not dominate $D$. Then ${\ff M}_V \cap D$ is a nonmaximal prime ideal of $D$ that by Lemma~\ref{list}(1) contains $b$.  Since $D$ is a Noetherian ring of Krull dimension $2$, there are only finitely many  height one prime ideals $P_1,\ldots,P_n$  of $D$ containing $b$. The valuation ring $V$ contains the integral closure $D'$ of the semilocal one-dimensional Noetherian ring $D_{P_1} \cap \cdots \cap D_{P_n}$. Since the ring $D'$ is a semilocal PID,  there are  only finitely many valuation rings between $D'$ and its quotient field $K$. Therefore, the set of valuation rings in $X'$ that do not dominate $D$ is finite. \qed
 
 \medskip

Returning to the proof of Claim 5, to show  that $\patch(X)$ contains a valuation ring of rank~$>1$, it is enough to prove that $X'$ contains a valuation ring  of rank $>1$. This is because
% by Lemma~\ref{closed map}, the map that sends a valuation ring of $F$ to its intersection with $K$ is closed in the patch topology, so that
%$X'$ is patch closed in $\Zar(K)$.  
% $\patch(X') \subseteq \{V \cap K:V \in \patch(X)\}$. 
%Thus, 
if $U \in X'$ has rank $>1$, there is a valuation ring $V \in \patch(X)$ with $V \cap K = U$, and $V$ is necessarily of rank $>1$ since $V $ extends $ U$. 

Thus we need only show that $X'$ contains a valuation ring of rank $>1$.  We prove this by verifying two claims.

\medskip

{\textsc{Claim 5}}(b). If $A'[1/a]$ is a valuation ring, then $X'$ contains a valuation ring of rank $>1$.   

\medskip

{\it Proof of Claim 5(b)}. Since $a \not \in J$, there exists $V \in X'$  such that $a \not \in {\ff M}_V$, and hence $1/a \in V$. Since $0 \ne b \in  J \cap A' \subseteq {\ff M}_V$, the valuation ring $V$ does not have rank~$0$.  
Therefore, 
$A'[1/a] \subseteq V \subsetneq K$. As an overring of the two-dimensional Noetherian ring $D$, $A'$ has Krull dimension at most two. (This follows from the Dimension Inequality \cite[Theorem 15.5, p.~118]{Mat}.) Since $a \in M'$, the Krull dimension of $A'[1/a]$ is less than that of $A'$, and hence, since $A'[1/a] \subsetneq K$, it follows that $A'[1/a]$ has Krull dimension  one (and $A'$ has Krull dimension two). Thus the valuation ring $A'[1/a]$ has  rank one.  

%Since $A'[1/a] \subseteq V \subsetneq K$, it follows that $A'[1/a] = V \in X'$.  

%If $A'$ is not completely integrally closed, then, 
%by Claim 2 (applied to $A'$ and $X'$), there is a valuation ring in $X'$ of rank $>1$, as desired. 

Now suppose by way of contradiction that every valuation ring in $X'$ has rank $\leq 1$.  Then $A'$, as an intersection of valuation rings in $X'$, is completely integrally closed. Thus, 
%
%Thus it remains to show that in Case 1, $A'$ is not completely integrally closed. Suppose to the contrary that $A'$ is completely integrally closed. 
since %$A' \subsetneq A'[1/a] \subsetneq K$,
 $A'$ has Krull dimension $2$,  $A'$ is not a valuation  domain. 
 
 By \cite[(4.2)]{OGraz} every  patch closed representation of a domain contains a minimal patch closed representation. Thus there  is a patch closed subset $Y$ of $X'$ (recall that $X'$ is patch closed) such that $A' = \bigcap_{V \in Y}V$ and no proper patch closed subset of $Y$ is a representation of $A'$.  
 By Lemma~\ref{list}(4), the rank one valuation ring $A'[1/a]$, as  a proper subring of $K$, is an intersection of valuation rings in $Y$. 
 Since $A'[1/a]$ has the same quotient field as the valuation rings in $X'$,    
 we conclude that 
$A'[1/a]$ is  in $Y$. 

Since $Y \subseteq X'$ and $J \cap A' = \bigcap_{V \in X'}{\ff M}_{V}$, we have by Lemma~\ref{list}(1) that $0 \ne b \in J \cap A' \subseteq {\ff M}_V$ for all $V \in Y$. In particular, $K \not \in Y$.   Let $W = A'[1/a]$. 
  Since $W$ has rank one, we conclude that  $\{W\} = {\cal U}(1/a) \cap Y$, so that $W$ is a patch isolated point in $Y$. We show this leads to a contradiction to the fact that $Y$ is a minimal patch closed representation of $A'$.  
  
  Since $W$ is a patch isolated point in $Y$, $Y \setminus \{W\}$ is a patch closed set, and hence by the minimality of $Y$ we have have $$A' \:\: \subsetneq \: \bigcap_{U \in Y  \setminus \{W\}}U.$$ Let  $$I \: \: =\bigcap_{U \in Y  \setminus \{W\}}{\ff M}_U.$$ Then $$\bigcap_{U \in Y  \setminus \{W\}}U \: \subseteq \: \: \End(I).$$ Since $A'$ is completely integrally closed, it follows that $I = 0$ or $I \not \subseteq A'$. The former case is impossible, since $0 \ne b \in J \cap A' \subseteq I$. Thus we conclude that $I \not \subseteq A'$.  Let $t \in I \setminus A'$.  Then $t^{-1} \not \in A'$ since for any $U \in Y \setminus \{W\}$, we have $t \in {\ff M}_U$ and $A' \subseteq U$.  Thus $t,t^{-1} \not \in A'$.  
 
 By Claim 5(a), all but finitely many valuation rings in $X'$, hence also in $Y$, dominate $D$. Applying Theorem~\ref{trans} to $D$, $A'$, $Y$  and $t$,  there  
exists a valuation ring $U \in \lim(Y)$ such that $t,t^{-1} \in U$.   
Since $Y$ is patch closed, $U \in Y$, and, since $t$ is a unit in $U$, $t \not \in {\ff M}_U$.  By the choice of $t$, $t$ is in the maximal ideal of every valuation ring in $Y$ except $W$. This forces $W = U \in \lim(Y)$, contradicting the fact that $W$ is a patch isolated point in $Y$. This contradiction shows that  
  $X'$ contains a valuation ring of rank $>1$.  This 
  completes the proof of Claim 5(b).  \qed
  
\medskip

{\textsc{Claim 5}}(c). If $A'[1/a]$ is not a valuation ring, then $X'$ contains a valuation ring of rank~$>1$.   

\medskip

{\it Proof of Claim 5(c)}. 
Since $A'[1/a]$ is not a valuation ring and $A'$ has quotient field $K$, there exists $0 \ne t \in K$ such that $t \not \in A'[1/a]$ and $t^{-1} \not \in A'[1/a]$.  Thus $t,t^{-1} \not \in A'$ and $at \not \in A'$. With the aim of applying Lemma~\ref{step} to $A'$ and $X'$, we fix $i>0$ and we show that there is a valuation ring $V \in X'$ such that $at^i$ is a unit in $V$ and $a \in {\ff M}_V$. Once this is proved, Lemma~\ref{step} implies that $X'$ contains a valuation ring of rank $>1$, and the proof of Claim 5(c) is complete. 

Let $s = at^i$.  Since $t \not \in A'[1/a]$ and $A'[1/a]$ is integrally closed, it follows that $t^i \not \in A'[1/a]$, and hence $s \not \in A'$. If $s^{-1} \in A'$, then $t^{-i} = a(a^{-1}t^{-i}) =as^{-1} \in A'$, so that, since $A'$ is integrally closed in $K$, $t^{-1} \in A'$, contrary to the choice of $t$. Therefore, $s,s^{-1} \not \in A'$.  
 By Claim 5(a), all but finitely many valuation rings in $X'$ dominate $D$. By Theorem~\ref{trans}, 
 with  $D, A', X'$ and $s$ playing 
 the roles of ``$D$'', ``$A$'', ``$X$'' and ``$t$'' in the theorem, 
  we obtain    $U \in X'$ such that $s,1/s \in U$ and $U$ dominates $D$.  Therefore, $s=at^i$ is a unit in $U$ and $a \in {\ff M}_U$ since $U$ dominates $D$. By Lemma~\ref{step}, $X'$ contains a valuation of rank $>1$, which proves Claim 5(c).      \qed
     
     \medskip
     
     Finally, to complete the proof of Claim 5, we note that Claim 5(b) and 5(c) show that $X'$ contains a valuation ring of rank $>1$.  As discussed after the proof of Claim 5(a), this implies that $\patch(X)$ contains a valuation of rank $>1$. Therefore, with the proof of Claim 5 complete, the proof of the theorem is complete also. 
%     
  %   we need only observe that if every valuation ring in $X$ dominates $A$, then, by Claim 3, $\patch(X)$ contains a valuation ring of rank $>1$, while if there is a valuation ring in $X$ that does not dominate $A$, then, by Claim 5, $\patch(X)$ contains a valuation ring of rank $>1$.
     \end{proof}
     
     \section{Compact sets and holomorphy rings} 
     
          The main results of the paper involve one-dimensional Pr\"ufer domains. We collect in the next lemma some basic properties of such rings that are needed for the theorems in this section. We denote by $J(A)$ the Jacobson radical of a ring $A$, and by $\Max(A)$ the space of maximal ideals of $A$ endowed with the Zariski topology.  
         
\begin{lemma} \label{A lemma} Let  $A$ be a one-dimensional Pr\"ufer domain with quotient field $F$, and let 
$X \subseteq \X(F)$  such that $F \not \in X$ and   $A = A(X)$. Then $J(A) = J(X)$.  If 
 $J(A) \ne 0$, then the Zariski, patch and inverse topologies all coincide on $X$ and  $$\patch(X) = \{A_M:M \in \Max(A)\}.$$      

\end{lemma}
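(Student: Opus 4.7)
Plan: First, since $F \notin X$ and $A$ is a one-dimensional Pr\"ufer domain, each $V \in X$ is a proper valuation overring of $A$ and thus equals $A_M$ for some $M \in \Max(A)$. Writing $X = \{A_M : M \in S\}$ for a uniquely determined $S \subseteq \Max(A)$, the facts that $J(X) \subseteq A$ and ${\ff M}_{A_M} \cap A = M$ together yield $J(X) = \bigcap_{M \in S} M$, from which $J(A) \subseteq J(X)$ is immediate. To obtain the reverse inclusion I first extract from the hypothesis $A = A(X)$ that $\bigcup_{M \in S} M = \bigcup_{M \in \Max(A)} M$ (both equal the set of non-units of $A$): given a non-unit $a$ of $A$, the element $a^{-1}$ is not in $A$, so $a^{-1} \notin A_M$ for some $M \in S$, forcing $a \in M$. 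Supposing now $a \in \bigcap_{M \in S} M$ and $N \in \Max(A)$ satisfies $a \notin N$, the relation $aA + N = A$ provides $u \in A$ and $n \in N$ with $1 = au + n$. Since $a \in M$ for every $M \in S$, so does $au$, forcing $n = 1 - au \notin M$ for any $M \in S$. Yet $n \in N$ is a non-unit and hence lies in $\bigcup_{M \in \Max(A)} M = \bigcup_{M \in S} M$, a contradiction. Thus $a \in N$ and $J(X) = J(A)$.

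Now assume $J(A) \ne 0$. Since $J(X) = J(A) \ne 0$ and $X$ consists of rank-one valuation rings, Proposition~\ref{T2}(2) gives that the Zariski and patch topologies agree on $X$; the inverse topology agrees with these because the members of $X$ are minimal in the specialization order of $\X(F)$, and the patch and inverse topologies coincide on such points in any spectral space (by \cite[Corollary~2.6]{ST}, as used in the proof of Proposition~\ref{T2}). For the identity $\patch(X) = \{A_M : M \in \Max(A)\}$, the inclusion $\subseteq$ follows from Lemma~\ref{list}(1): each $V \in \patch(X)$ contains $A$ and satisfies ${\ff M}_V \supseteq J(X) \ne 0$, so $V \ne F$, and therefore $V = A_M$ for some $M \in \Max(A)$. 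For the reverse inclusion, I plan to fix $0 \ne a \in J(A)$, observe that in the one-dimensional ring $A$ the set of primes containing $a$ equals $\Max(A)$, so $\Max(A)$ is Zariski closed in $\Spec(A)$ and hence patch closed; as a spectral space consisting entirely of closed points, $\Max(A)$ is a Boolean space, and the Zariski, patch, and inverse topologies on it all coincide. The already-established identity $\bigcap_{M \in S} M = J(A)$ is precisely Zariski density of $S$ in $\Max(A)$, and by the Boolean-space reduction this is equivalent to patch density. Transporting this density across the standard Pr\"ufer homeomorphism $\Spec(A) \to \X(F/A)$, $P \mapsto A_P$, yields $\patch(X) \supseteq \{A_M : M \in \Max(A)\}$.

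The main obstacle is this final topological step: the Pr\"ufer-domain correspondence between $\Spec(A)$ and $\X(F/A)$ must be invoked and verified to respect the relevant topologies so that patch density in $\Spec(A)$ transfers to patch density in $\X(F)$, and the Boolean-space reduction on $\Max(A)$ is what allows Zariski density of $S$ to upgrade to patch density. The algebraic engine driving the entire proof is the non-obvious identity $\bigcup_{M \in S} M = \bigcup_{M \in \Max(A)} M$, which converts the hypothesis $A = A(X)$ into a pointwise statement usable throughout.
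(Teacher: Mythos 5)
Your computation of $J(A)=J(X)$ is correct and complete: identifying $X$ with $\{A_M:M\in S\}$ for $S\subseteq\Max(A)$, showing $J(X)=\bigcap_{M\in S}M$, and deducing $\bigcap_{M\in S}M\subseteq J(A)$ from the identity $\bigcup_{M\in S}M=\bigcup_{M\in\Max(A)}M$ is a clean, self-contained argument (the paper simply cites \cite{HNoeth} for the inclusion $J(X)\subseteq J(A)$). Your derivation of $\patch(X)=\{A_M:M\in\Max(A)\}$ also takes a different route from the paper, which invokes \cite[Corollary 4.10]{FFL} to identify $\patch(X)\cup\{F\}$ with the set of all valuation overrings of $A$; your argument via patch density of $S$ in the Boolean space $\Max(A)\cong\Spec(A/J(A))$ and the center bijection $\Spec(A)\to\X(F/A)$ is viable, provided you actually verify (as you acknowledge) that for a Pr\"ufer domain this bijection is a Zariski homeomorphism, whence automatically a patch homeomorphism.

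The genuine gap is in your justification that the inverse topology coincides with the other two. First, the members of $X$ are not minimal in the specialization order of $\X(F)$: since $V\leq W$ if and only if $W\subseteq V$, the field $F$ is the unique minimal (generic) point of $\X(F)$, and every rank one valuation ring is a specialization of $F$. Minimality holds only after passing to a patch closed subspace excluding $F$, such as $\mathcal{V}(a^{-1})$ for $0\neq a\in J(X)$, which is how Proposition~\ref{T2}(2) is actually proved. Second, and more seriously, minimality alone does not yield agreement of the inverse and patch topologies: \cite[Corollary 2.6]{ST}, as used in Proposition~\ref{T2}(2), gives only that the Zariski and patch topologies agree on the minimal points of a spectral space, and there are spectral spaces (for instance the spectrum of a local ring with infinitely many minimal primes) on whose set of minimal points the inverse topology is strictly coarser than the patch topology. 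The version of the result that controls all three topologies requires every point to be both minimal and maximal, and the elements of $X$ need not be maximal in $\mathcal{V}(a^{-1})$, since that set may contain valuation rings of rank $>1$ properly contained in members of $X$. The paper's fix is to transport $X$ into $\Max(A)\cong\Spec(A/J(A))$, a spectral space in which every point is both minimal and maximal (equivalently, $A/J(A)$ is von Neumann regular), and only then invoke \cite[Corollary 2.6]{ST}; alternatively, once $\patch(X)=\{A_M:M\in\Max(A)\}$ is established, you may apply the both-minimal-and-maximal version of that result to the spectral space $\patch(X)$ itself and restrict to $X$.
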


\begin{proof}  
It is straightforward to check that  $J(X) \subseteq J(A)$; for example, see \cite[Remark~1.3]{HNoeth}. To see that the reverse inclusion holds, let $V \in X$. Since $F \not \in X$ and $A$ is a one-dimensional Pr\"ufer domain,  $V = A_M$, where $M = {\ff M}_V \cap A$ is a maximal ideal of $A$.  Since $J(A) \subseteq M \subseteq {\ff M}_V$, we have $J(A) \subseteq   J(X)$. This proves the first assertion of the lemma. 

Suppose now that $J(A) \ne 0$.  Since $A$ has Krull dimension one,  $\Max(A)$ is homeomorphic to the spectral space $\Spec(A/J(A))$. In a spectral space for which every point is both minimal and maximal with respect to the specialization order, the Zariski, patch and inverse topologies all agree; cf.~\cite[Corollary 2.6]{ST} or use the fact that $A/J(A)$ is a von Neumann regular ring. Thus these three topologies all agree on $X$ since $X$ is homeomorphic to a subspace of $\Max(A)$.    

Finally,  since $A$ is a one-dimensional Pr\"ufer domain represented by $X$, the
 set of all valuation overrings of $A$ (each of which must have rank $\leq 1$) is $\patch(X) \cup \{F\}$  
  \cite[Corollary 4.10]{FFL}.  Since $J(A) \ne 0$, Lemma~\ref{list}(1) implies $F \not \in \patch(X)$. 
  Therefore, $\patch(X) = \{A_{M}:M \in \Max(A)\}$.  
\end{proof}

\begin{remark}
Topological aspects and factorization theory of one-dimensional Pr\"ufer domains with nonzero Jacobson radical are studied in \cite{HOR}.  
\end{remark}

                    The first application of the results of the previous section is the following characterization of subsets of $\X(F)$ whose holomorphy ring is a one-dimensional Pr\"ufer domain with nonzero Jacobson radical and quotient field $F$.  
                    
      \begin{theorem} \label{rank one theorem} The following are equivalent for a nonempty subset $X$ of  $ \X(F)$  with $J(X) \ne 0$.    
       
      \begin{enumerate}[$(12)$]
      \item[{\em (1)}] $A(X)$  is a one-dimensional Pr\"ufer domain with   quotient field $F$. 
      \item[{\em (2)}] 
       $X$ is contained in a    quasicompact set of rank one valuation rings in $\X(F)$.
       
       \item[{\em (3)}]  Every valuation ring in $\patch(X)$ has rank one. 
       \end{enumerate}
  \end{theorem}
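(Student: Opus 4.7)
The plan is to establish (1) $\Rightarrow$ (3) $\Rightarrow$ (2) $\Rightarrow$ (3) $\Rightarrow$ (1), since (3) is the pivotal rank-theoretic condition. Implication (1) $\Rightarrow$ (3) is immediate from Lemma~\ref{A lemma}: it identifies $\patch(X)$ with $\{A_M : M \in \Max(A(X))\}$, each of rank one. For (3) $\Rightarrow$ (2), the set $\patch(X)$ is patch closed in $\X(F)$, hence spectral (in particular quasicompact) in the Zariski topology by Lemma~\ref{list}(2), and by (3) consists of rank-one valuation rings, so $X \subseteq \patch(X)$ is the required containment. For (2) $\Rightarrow$ (3), if $X \subseteq Y$ with $Y$ quasicompact of rank one, then $\patch(X) \subseteq \patch(Y) \subseteq Y \cup \{F\}$ by monotonicity of patch closure together with Lemma~\ref{list}(3), while $J(X) \ne 0$ combined with Lemma~\ref{list}(1) rules out $F \in \patch(X)$; hence $\patch(X) \subseteq Y$ consists of rank-one valuation rings.

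The core work is (3) $\Rightarrow$ (1). Write $A = A(X) = A(\patch(X))$, and note that under (3) the set $\patch(X)$ is quasicompact and consists of rank-one valuation rings, so Lemma~\ref{list}(4) applies. To see $qf(A) = F$, fix $0 \ne c \in J(X)$; Lemma~\ref{list}(1) gives $c \in {\ff M}_V$ for every $V \in \patch(X)$, so no such $V$ contains $1/c$. Applying Lemma~\ref{list}(4) with $S = \{c^n : n \geq 0\}$ collapses the intersection to $\{F\}$, giving $A[1/c] = F$; since $A[1/c] \subseteq qf(A) \subseteq F$, we conclude $qf(A) = F$.

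Now fix a maximal ideal $M$ of $A$ and set $Y_M = \{V \in \patch(X) : A_M \subseteq V\}$. Lemma~\ref{list}(4) with $S = A \setminus M$ yields $A_M = \bigcap_{V \in Y_M \cup \{F\}} V$. Rule out $Y_M = \emptyset$: otherwise $A_M = F$, forcing $M = 0$ and hence $A = F$, which would make $X \subseteq \{F\}$ and hence $J(X) = 0$, a contradiction. Thus $A_M = A(Y_M)$ is local and $J(Y_M) \supseteq J(\patch(X)) = J(X) \ne 0$. If $A_M$ were not a valuation ring, Theorem~\ref{ugh} applied to $Y_M$ would produce a rank-$>1$ valuation ring in $\patch(Y_M) \subseteq \patch(X)$, contradicting (3). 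Hence $A_M$ is a valuation domain. Finally, if $\rank(A_M) \geq 2$, the unique rank-one overring of $A_M$ in $F$ is the localization $(A_M)_P$ at the minimal nonzero prime $P$, forcing $Y_M \subseteq \{(A_M)_P\}$ and hence $A_M = A(Y_M) = (A_M)_P$, contradicting $A_M \ne (A_M)_P$. Therefore each $A_M$ has rank one, so $A$ is a one-dimensional Pr\"ufer domain with quotient field $F$.

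The main obstacle is the step invoking Theorem~\ref{ugh} to upgrade ``$A_M$ local'' to ``$A_M$ a valuation ring''; the remaining ingredients are routine use of the patch topology, Lemma~\ref{list}, and the overring structure of valuation rings.
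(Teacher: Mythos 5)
Your proposal is correct and follows essentially the same route as the paper: the peripheral implications are handled by Lemmas~\ref{list} and~\ref{A lemma} exactly as in the text, and the core step (3)~$\Rightarrow$~(1) proceeds identically --- establish $A[1/c]=F$ via Lemma~\ref{list}(4), represent each $A_M$ by the patch closed set $Y_M$ of rank one valuation rings containing it, and invoke Theorem~\ref{ugh} to conclude $A_M$ is a valuation ring, hence of rank one since its representing valuation rings are overrings forming a chain. The only cosmetic differences are your cycle of implications and your explicit handling of the degenerate case $Y_M=\emptyset$, which the paper leaves implicit.
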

  
  \begin{proof}
  Let $A = A(X)$ and $J = J(X)$. 
  
 (1) $\Rightarrow$ (2)  Since $A$ is a one-dimensional  Pr\"ufer domain with quotient field $F$, the subset of $\X(F)$ given by  $Y = \{A_M:M \in \Max(A)\}$ consists of rank one valuation rings in $\X(F)$. The only other valuation overring of $A$ is $F$. Since $J \ne 0$, we have $F \not \in X$, which forces 
  $X \subseteq Y$.  Moreover, $Y$ is homeomorphic to $\Max(A)$ and the maximal spectrum of a ring  is quasicompact, so  statement (2) follows.

  (2) $\Rightarrow$ (3)  Let $Y$ be a    quasicompact set of rank one valuation rings in $\X(F)$ such that $X \subseteq Y$. Since $Y$ is quasicompact, Lemma~\ref{list}(3) implies  $\patch(X) \subseteq Y \cup \{F\}$.
   If $F \in \patch(X)$, then 
 from Lemma~\ref{list}(1) it follows that $J =0$, contrary to assumption. Therefore, $\patch(X) \subseteq Y$, so that $\patch(X)$ consists of rank one valuation rings. 
 
 (3) $\Rightarrow$ (1) 
 By Lemma~\ref{list}(1), $A = \bigcap_{V \in \patch(X)}V$ and  $0 \ne J = \bigcap_{V \in \patch(X)}{\ff M}_V$.  
 Thus we can assume without loss of generality that $X = \patch(X)$.  
 We claim first that $A$ has quotient field $F$.  
 Let $0 \ne a \in J$. By Lemma~\ref{list}(2), $X$ is     quasicompact, so by Lemma~\ref{list}(4) we have $$A[1/a] \: \: =  \: \bigcap_{1/a \in V \in X \cup \{F\}}V \: = \: F,$$ where the last equality follows from the fact that every valuation ring $V$ in $X$ has rank one and satisfies  $a \in {\ff M}_V$. Since $A[1/a] = F$, we conclude that $A$ has quotient field $F$.

%  Let $F'$ be the quotient field of $A$, and let $Z' = \{V \cap F':V \in \patch(Z)\}$.  Since $0 \ne \bigcap_{V \in Z}{\ff M}_V \subseteq A$, it follows that  $\bigcap_{U \in Z'}{\ff M}_U \ne 0$. Since every valuation in $Z$ has rank one, so does every valuation ring in $Z'$. Also, by Lemma~\ref{closed map}, $Z'$ is a patch closed subspace of the Zariski-Riemann space of $F'$. Thus, replacing $F$ with $F'$ and $Z$ with $Z'$, we can for the sake of using familiar notation assume without loss of generality for the rest of the proof that $A$ has quotient field $F$.  
  
   To prove that $A$ is  a one-dimensional Pr\"ufer domain, it suffices to show that $A_M$ is a rank one valuation domain for each maximal ideal $M$ of $A$. 
  Let $M$ be a maximal ideal of $A$.  Let $Y = \{V \in X:A_M \subseteq V\}$. Since $$Y = X \cap (\bigcap_{t \in A_M}{\cal U}(t)),$$ $Y$ is  patch closed in $X$. By Lemma~\ref{list}(2) and the fact that $X$ is patch closed in $\Zar(F)$, $Y$ is a quasicompact subset of $\X(F)$.  Since $Y$ is quasicompact and consists of rank one valuation rings,   Lemma~\ref{list}(4) implies that   $A_M= \bigcap_{ V \in Y}V$.
%Since 
%$Z$ is patch closed and $X \subseteq F$, $F$ is a patch isolated point in $X \cup \{F\}$.  
Thus $Y$ is  a patch closed representation of $A_M$ consisting of rank one valuation rings. Since $J \ne 0$, Theorem~\ref{ugh} implies that $A_M$ is a valuation domain. Since $A_M$ has quotient field $F$ and $Y$ is a representation of $A_M$ consisting of rank one valuation domains, it follows that $A_M \in Y$. Hence $A_M$ is  a rank one valuation domain, which proves that  
%
% Next, since the choice of maximal ideal $M$ was arbitrary, we conclude that $A$ is a Pr\"ufer domain. Since $A$ is a Pr\"ufer domain and $Z$ is a patch closed representation of $A$ consisting of rank one valuation rings, {\bf [ref]}  implies $ Z \cup \{F\}$ is the set of all valuation rings between $A$ and $F$. 
%Thus, since every localization of $A$ at a maximal ideal is one of these valuation rings, we conclude that $A_M$ has Krull dimension one. 
 $A$ is a Pr\"ufer domain with  Krull dimension one.  
 % Finally, by Lemma~\ref{A lemma}, $J(A) = \bigcap_{V \in Z}{\ff M}_V$.
%and, since this ideal is nonzero, Corollary~\ref{hmm cor} implies that $A$ is a B\'ezout domain. 
  \end{proof}

A domain $A$ is an {\it almost Dedekind domain} if for each maximal ideal $M$ of $A$, $A_M$ is a DVR.  There exist many interesting examples of almost Dedekind domains; see for example \cite{HOR, Loper, OFact} and their references. For the factorization theory of such rings, see \cite{FHL, HOR, LL}.

%\begin{example}  Suppose that $F$ is the algebraic closure of the field ${\mathbb{Q}}$ of rational numbers, let $p$ be a prime number, and let $Z$ be the set of valuation rings of $F$ containing ${\mathbb{Z}}_p$. Then $Z$ is a patch closed set of rank one valuation rings.
%\end{example}  

      \begin{corollary} \label{ADD cor}  Let $X$ be a nonempty set of $ \X(F)$ such that $J(X) \ne 0$. Then $X$     
 is contained in a    quasicompact set of DVRs in $\X(F)$ if and only if 
      $A(X)$  is an almost Dedekind  domain with quotient field $F$.  
    
  \end{corollary}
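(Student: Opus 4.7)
The plan is to deduce this corollary directly from Theorem~\ref{rank one theorem} and Lemma~\ref{A lemma}, which together reduce the one-dimensional Pr\"ufer case to its maximal-spectrum representation. No new limit-point machinery should be required, since the hard work has already been done in proving Theorem~\ref{rank one theorem}; the remaining task is simply to upgrade ``rank one'' to ``DVR''.

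For the forward implication, the plan is as follows. Assuming $X$ sits inside a quasicompact set $Y$ of DVRs, Theorem~\ref{rank one theorem} immediately yields that $A := A(X)$ is a one-dimensional Pr\"ufer domain with quotient field $F$. Since $J(X) \ne 0$, Lemma~\ref{list}(1) forces $F \notin \patch(X)$, and since the quasicompactness of $Y$ together with Lemma~\ref{list}(3) gives $\patch(X) \subseteq Y \cup \{F\}$, I obtain $\patch(X) \subseteq Y$. Lemma~\ref{A lemma} then identifies $\patch(X)$ with $\{A_M : M \in \Max(A)\}$, so each localization $A_M$ lies in $Y$ and is therefore a DVR. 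Hence $A$ is almost Dedekind.

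For the converse, set $Y = \{A_M : M \in \Max(A)\}$ and verify three things in turn. First, $Y$ consists of DVRs, directly from the almost Dedekind hypothesis. Second, $Y$ is quasicompact, because the mapping $M \mapsto A_M$ realizes a homeomorphism between $\Max(A)$ (quasicompact as the maximal spectrum of a ring) and $Y$. Third, $X \subseteq Y$: any $V \in X$ is a valuation overring of the one-dimensional Pr\"ufer domain $A$, and since $0 \ne J(X) \subseteq {\ff M}_V$ we have $V \ne F$, so by the standard correspondence between prime ideals and valuation overrings of a Pr\"ufer domain $V = A_P$ for some nonzero prime $P$, necessarily maximal since $\dim A = 1$.

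I do not anticipate a significant obstacle; both directions are bookkeeping once Theorem~\ref{rank one theorem} and Lemma~\ref{A lemma} are in hand. The only point that requires care is the use of the hypothesis $J(X) \ne 0$: in the forward direction it discards the point $F$ from $\patch(X)$ so that every $A_M$ can be recognized as an element of $Y$, and in the converse it guarantees $F \notin X$ so that each $V \in X$ corresponds to a genuine maximal ideal of $A$ rather than to the zero ideal.
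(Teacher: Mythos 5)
Your proposal is correct and follows essentially the same route as the paper: the forward direction uses exactly the chain Theorem~\ref{rank one theorem}, Lemma~\ref{list}(3), Lemma~\ref{list}(1) to get $\patch(X)\subseteq Y$, then Lemma~\ref{A lemma} to place each $A_M$ in $\patch(X)$ and conclude it is a DVR. Your converse merely spells out the one-line appeal to Lemma~\ref{A lemma} and Theorem~\ref{rank one theorem} that the paper makes, using the same identification of the valuation overrings of a one-dimensional Pr\"ufer domain with its localizations at maximal ideals.
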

  
  \begin{proof}  Let $A = A(X)$.  
  Suppose $X$ is contained in a    quasicompact set $Y$ of DVRs in $\X(F)$.  By Theorem~\ref{rank one theorem}, $A$ is a one-dimensional Pr\"ufer domain with quotient field $F$ and nonzero Jacobson radical. 
  By Lemma~\ref{list}(3), $$\patch(X) \subseteq \patch(Y) = Y \cup \{F\}.$$ Since $J(X) \ne 0,$  Lemma~\ref{list}(1) implies $F \not \in \patch(X)$, so $\patch(X) \subseteq Y$. Thus $\patch(X)$ consists of DVRs.  By Lemma~\ref{A lemma}, we have that for 
     each maximal ideal $M$ of $A$, $A_M$ is in $\patch(X)$ and hence $A_M$ is a DVR.  Thus $A$ is an almost Dedekind domain.  The converse follows from 
  Lemma~\ref{A lemma} and Theorem~\ref{rank one theorem}. 
%   the fact that every valuation overring of $A$ is a localization of $A$, and hence is either a DVR or $F$. Indeed, since $\bigcap_{V \in X}{\ff M}_V \ne 0$, we have $F \not \in X$, so every valuation ring in $X$ is a DVR. 
  \end{proof} 
  
  \begin{remark}{Let $X$ be a nonempty quasicompact set of DVRs in $ \X(F)$ such that $J(X) \ne 0$. 
By Corollary~\ref{ADD cor}, $A = A(X)$ is an almost Dedekind domain, and  $J(A) = J(X)$ by Lemma~\ref{A lemma}.  If there is $t \in J(A)$ such that $J(A) = tA$ (equivalently, ${\ff M}_V = tV$ for each $V \in X)$, then 
 $A$ has the property that every proper ideal is a product of radical ideals. For this and related results on such rings, which are known  in the literature as SP-domains or domains  with the radical factorization property, see \cite{FHL, HOR, OFact}. 
 }
\end{remark}

We prove next our main theorem of this section (the ``main lemma'' of the introduction) regarding the correspondence between quasicompact sets and holomorphy rings in the space of rank one valuation rings.

\begin{theorem} \label{correspond 1} The mappings   \begin{center} $X \mapsto A(X)$  \: and \: $A \mapsto \{A_M:M \in \Max(A)\}$ \end{center}
 define a bijection between
  the quasicompact sets  $X$ of rank one valuation rings in $\Zar(F)$ with $J(X) \ne 0$
 and
the one-dimensional Pr\"ufer  domains $A$ with quotient field $F$ and  nonzero Jacobson radical.
%This bijection restricts to a bijection between quasicompact sets $X$  of DVRs $V \in \R(F)$ with $J(X) \ne 0$  and almost Dedekind domains $A$ with quotient field $F$ and nonzero Jacobson radical.  
\end{theorem}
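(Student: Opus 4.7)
The plan is to verify that both maps are well-defined and mutually inverse, with most of the substantive work already accomplished in Theorem~\ref{rank one theorem} and Lemma~\ref{A lemma}; what remains is essentially bookkeeping.

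First I would verify well-definedness of the forward map $X \mapsto A(X)$. Given a quasicompact set $X$ of rank one valuation rings with $J(X) \ne 0$, the implication $(2) \Rightarrow (1)$ of Theorem~\ref{rank one theorem} yields that $A(X)$ is a one-dimensional Pr\"ufer domain with quotient field $F$. Since no rank one valuation ring equals $F$, we have $F \notin X$, whence Lemma~\ref{A lemma} gives $J(A(X)) = J(X) \ne 0$. Conversely, for the reverse map, given such a Pr\"ufer domain $A$, set $Y = \{A_M : M \in \Max(A)\}$. The argument in the proof of $(1) \Rightarrow (2)$ of Theorem~\ref{rank one theorem} already establishes that $Y$ consists of rank one valuation rings and is quasicompact (it is homeomorphic to $\Max(A)$, which is quasicompact). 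Moreover $A(Y) = \bigcap_M A_M = A$, and since $0 \ne J(A) \subseteq \bigcap_M M A_M = J(Y)$, we have $J(Y) \ne 0$.

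Next I would check that the two compositions are identities. The composition $A \mapsto Y \mapsto A(Y)$ returns $A$ immediately from the standard identity $A = \bigcap_M A_M$ for a one-dimensional Pr\"ufer domain. For the composition $X \mapsto A(X) \mapsto \{A(X)_M : M \in \Max(A(X))\}$, applying Lemma~\ref{A lemma} to $X$ (noting $F \notin X$ and using that $A(X)$ is a one-dimensional Pr\"ufer domain with nonzero Jacobson radical) gives $\patch(X) = \{A(X)_M : M \in \Max(A(X))\}$. On the other hand, Lemma~\ref{list}(3) gives $\patch(X) \subseteq X \cup \{F\}$ since $X$ is quasicompact and consists of rank one valuation rings, while Lemma~\ref{list}(1) rules out $F \in \patch(X)$ because $J(X) \ne 0$. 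Hence $\patch(X) = X$, and this yields $X = \{A(X)_M : M \in \Max(A(X))\}$, completing the bijection.

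In this plan there is no genuine obstacle remaining: all the hard work—showing that a quasicompact set of rank one valuation rings with $J(X) \ne 0$ has a one-dimensional Pr\"ufer holomorphy ring (via Theorem~\ref{ugh} feeding into Theorem~\ref{rank one theorem}) and that the patch, Zariski, and inverse topologies coincide in this setting (Lemma~\ref{A lemma} together with Proposition~\ref{T2})—has already been done. The only minor subtlety to be careful about is keeping track of why $F$ never enters either $X$ or $\patch(X)$ in the rank one setting with $J(X) \ne 0$, which is needed twice above.
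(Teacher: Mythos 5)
Your proposal is correct and follows essentially the same route as the paper's own proof: both directions rest on Theorem~\ref{rank one theorem} and Lemma~\ref{A lemma}, with Lemma~\ref{list}(1) and (3) used to exclude $F$ from $\patch(X)$ and conclude $X = \patch(X) = \{A(X)_M : M \in \Max(A(X))\}$. The only cosmetic difference is that you spell out $J(A) \subseteq \bigcap_M MA_M = J(Y)$ for the reverse map, where the paper simply observes $0 \ne t \in J(X)$; the content is identical.
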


\begin{proof}  Let $X$ be a 
quasicompact set of rank one valuation rings in $\Zar(F)$ with $J(X) \ne 0$.  
 By   Lemma~\ref{A lemma} and Theorem~\ref{rank one theorem}, $A = A(X)$ is a one-dimensional Pr\"ufer domain such that  $J(A) \ne 0$ and $A$ has quotient field $F$.  By Lemma~\ref{list}(3), $\patch(X) \subseteq X \cup \{F\}$. Since $J(A) \ne 0$, Lemma~\ref{list}(1) implies $F \not \in \patch(X)$.  Thus $X = \patch(X)$, and, by 
Lemma~\ref{A lemma}, 
 $X 
 = \{A_M:M \in \Max(A)\}$. 
Conversely, suppose $A$ is one-dimensional Pr\"ufer overring with quotient field $F$ and $0 \ne t \in J(A)$. Since $\Max(A)$ is quasicompact,  $X=\{A_M:M \in \Max(A)\}$ is  a  quasicompact set of rank one valuation rings  such that $A = A(X)$ and $0\ne t \in J(X)$.  
 \end{proof}

 The next example
shows the necessity of the hypotheses in Theorem~\ref{correspond 1}.  
 
 \begin{example} \label{main example} 
 Let $X$ be a nonempty subset of $\Zar(F)$. Theorem~\ref{correspond 1} shows that if (a) $X$ consists of rank one valuation rings, (b) $X$ is quasicompact and (c) $J(X) \ne 0$, then $A(X)$ is a Pr\"ufer domain. 
 These hypotheses  are necessary in the sense that $A(X)$ need not be a Pr\"ufer domain if any one of (a), (b) or (c) is omitted.  The following classes of examples illustrate this. 
\begin{enumerate}[$(1)$]
\item {\it An example in which $A(X)$ is not a Pr\"ufer domain but in which (a) and (b) hold.} Let $D$ be an integrally closed  Noetherian domain of Krull dimension $>1$. Then the set $X$ of localizations of $D$ at height one prime ideals is a quasicompact set of rank one valuation rings for which $A(X) = D$. 

\item {\it An example in which $A(X)$ is not a Pr\"ufer domain but in which (b) and (c) hold.}  Let $k$ be a field, and let $V$ be the DVR $k(S)[T]_{(T)}$, where $S$ and $T$ are indeterminates for $k$.  Let $R = k + {\ff M}_V$.  Then $R$ is a one-dimensional integrally closed local domain. With $X$ the set of valuation overrings of $R$ of rank $>0$, we have that $R = A(X)$ and  $J(X) \ne 0$. Moreover,  $X$ is quasicompact since $X$ is the closed subset of the (quasicompact) space of valuation overrings. Indeed, $X$ is the set of valuation overrings of $R$ contained   in $V$.  
%
%Suppose $D$ is a regular local ring of Krull dimension $2$ with quotient field $F$. Let ${\ff m}$ denote the maximal ideal of $D$, and let $x \in {\ff m} \setminus {\ff m}^2$.  Then $U = D[{\ff m}x^{-1}]_{xD[{\ff m}x^{-1}]}$ is a DVR that dominates $D$.  Let $X$ be the set of all valuation rings $V$ such that $D \subseteq V \subseteq U$. Since $D[{\ff m}y^{-1}]/{{\ff m}D[{\ff m}y^{-1}]}$ is isomorphic to $(D/{\ff m})[T]$, where $T$ is an indeterminate \cite[Exercise 31, pp.~43--44]{Kap}, it follows that $D +{\ff M}_U = \bigcap_{V \in X}V = A(X)$.  The subspace $X$ of $\Zar(F)$ is homeomorphic to $\Zar((D/{\ff m})(T)/D/{\ff m})$, and hence is quasicompact.  Moreover, every valuation ring $V \in X$ dominates $D$, so $J(X) \ne 0$. 
Thus (b) and (c) hold, but $A(X)$ is not a Pr\"ufer domain since $A(X)$ is a local domain contained in more than one valuation ring of Krull dimension $2$. 

\item {\it An example in which $A(X)$ is not a Pr\"ufer domain but in which (a) and (c) hold.} Let $D$ be an integrally closed Noetherian local domain of Krull dimension~$>1$ with quotient field $F$.  To exhibit the desired example, it suffices to show  that $D$ is the intersection of the DVRs in $\Zar(F/D)$ that dominate $D$.  Let ${\ff m}$ denote the maximal ideal of $D$, and let $x \in F \setminus D$. If $x^{-1} \in D$, then $x^{-1} \in {\ff m}$. Since every Noetherian local domain is birationally dominated by a DVR 
\cite[p.~26]{Ch}, there exists a DVR $V$ in $\Zar(F/D)$  such that $x^{-1} \in {\ff M}_V$. Thus $x \not \in V$.  On the other hand, if $x^{-1} \not \in D$, then the ring $D[x^{-1}]$ has a maximal ideal generated by ${\ff m}$ and $x^{-1}$ \cite[Exercise 31, pp.~43--44]{Kap}, so, again by \cite[p.~26]{Ch}, there is a DVR $V$ in $\Zar(F/D)$ that dominates $D$ and for which $x^{-1} \in {\ff M}_V$ and hence $x \not \in V$. It follows that $D$ is the intersection of all the DVRs in $\Zar(F/D)$ that dominate it. 

%Let $D$ be a domain of Krull dimension $>1$ that is an integrally closed finitely generated algebra over a field, and let $F$ denote its quotient field. For each maximal ideal ${\ff m}$ of $D$ let $V$ be a rank one valuation ring that birationally dominates $D$, and let $X$ be the collection of these valuation rings (at least one for each maximal ideal of $D$).   Then $D= A(X)$, 
\end{enumerate}

\end{example} 
 
 Restricting to DVRs in Theorem~\ref{correspond 1} yields a correspondence with almost Dedekind domains.
 
 \begin{corollary} \label{correspond 1 cor} 
 The mappings  \begin{center} $X \mapsto A(X)$  \: and \:   $A \mapsto \{A_M:M \in \Max(A)\}$ \end{center}
 define a bijection between
  the quasicompact sets  $X$ of DVRs in $\Zar(F)$ with $J(X) \ne 0$
 and
the almost Dedekind domains $A$ with quotient field $F$ and nonzero Jacobson radical.  
\end{corollary}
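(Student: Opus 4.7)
The plan is to derive this corollary directly from Theorem~\ref{correspond 1} combined with Corollary~\ref{ADD cor}. Theorem~\ref{correspond 1} already gives a bijection between quasicompact sets $X$ of rank one valuation rings in $\Zar(F)$ with $J(X) \ne 0$ and one-dimensional Pr\"ufer domains with quotient field $F$ and nonzero Jacobson radical. Since an almost Dedekind domain is by definition a one-dimensional Pr\"ufer domain in which every localization at a maximal ideal is a DVR, the only task is to check that under this bijection the subclass of sets $X$ consisting of DVRs corresponds to the subclass of almost Dedekind domains.

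For the forward implication, I would start with a quasicompact set $X$ of DVRs with $J(X) \ne 0$ and apply Corollary~\ref{ADD cor}, taking $X$ itself as the enveloping quasicompact set of DVRs. This yields that $A(X)$ is an almost Dedekind domain with quotient field $F$. That $A(X)$ has nonzero Jacobson radical is then a consequence of Lemma~\ref{A lemma}, which identifies $J(A(X))$ with $J(X)$.

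For the reverse implication, suppose $A$ is an almost Dedekind domain with quotient field $F$ and nonzero Jacobson radical. Since $A$ is in particular a one-dimensional Pr\"ufer domain satisfying the hypotheses of Theorem~\ref{correspond 1}, the set $X = \{A_M : M \in \Max(A)\}$ is a quasicompact set of rank one valuation rings with $A(X) = A$ and $J(X) \ne 0$. The almost Dedekind hypothesis on $A$ guarantees that each localization $A_M$ is a DVR, so $X$ automatically consists of DVRs.

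Both composition identities $X \mapsto A(X) \mapsto \{A(X)_M : M \in \Max(A(X))\}$ and $A \mapsto \{A_M : M \in \Max(A)\} \mapsto A(\{A_M : M \in \Max(A)\})$ are already verified as part of Theorem~\ref{correspond 1}, and the arguments above only restrict that bijection to the indicated subclasses, so no additional composition check is needed. There is no real obstacle here; the corollary is essentially a repackaging of Theorem~\ref{correspond 1} through Corollary~\ref{ADD cor}, and the only bookkeeping is ensuring that the ``nonzero Jacobson radical'' condition on $A$ is correctly translated to the ``$J(X) \ne 0$'' condition on $X$ via Lemma~\ref{A lemma}.
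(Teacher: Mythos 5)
Your proposal is correct and follows essentially the same route as the paper: the forward direction is obtained by applying Corollary~\ref{ADD cor} with $X$ as its own enveloping quasicompact set of DVRs, the reverse direction notes that the almost Dedekind hypothesis makes each $A_M$ a DVR, and the bijection itself is inherited from Theorem~\ref{correspond 1}. No gaps.
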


\begin{proof} If $X$ is a 
quasicompact set of DVRs in $\X(F)$ with $J(X) \ne 0$, then, by Corollary~\ref{ADD cor}, $A$ is an almost Dedekind domain. 
Conversely, if  $A$ is almost Dedekind domain, then clearly
  $X=\{A_M:M \in \Max(A)\}$ is  a  quasicompact set of  DVRs. Thus the corollary follows  from Theorem~\ref{correspond 1}.  
 \end{proof}

By Proposition~\ref{T2}(2) the quasicompact sets in Theorem~\ref{correspond 1} are compact, so the theorem alternatively can be stated for compact sets instead. 
Along these lines, in the case in which all the valuation rings under consideration occur as overrings of a domain with quotient field $F$, the restriction that $J(X) \ne 0$ in Theorem~\ref{correspond 1} can be omitted (or, more correctly, hidden) if  the quasicompact hypothesis is strengthened to that of being compact. 

\begin{corollary} \label{correspond 2} Let $R$ be a proper subring of $F$ with quotient field $F$. There is a bijection given by  \begin{center} $X \mapsto A(X)$  \: and \:  $A \mapsto \{A_M:M \in \Max(A)\}$ \end{center}
 between the compact sets  $X$ of rank one valuation overrings of $R$ and the one-dimensional Pr\"ufer  overrings $A$ of $R$  with   nonzero Jacobson radical.   
\end{corollary}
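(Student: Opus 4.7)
The plan is to deduce this corollary from Theorem~\ref{correspond 1} by checking that the compactness assumption plus the ``overring of $R$'' assumption automatically supply the two hypotheses missing from the statement, namely $J(X) \ne 0$ and ``quotient field equal to $F$''. The key bridge will be Proposition~\ref{T2}(3): for a set of rank one valuation rings, Hausdorffness is equivalent to $J(X) \ne 0$ as soon as the holomorphy ring has quotient field $F$. Since every overring of $R$ has quotient field $F$ (as $R$ already does), any intersection of overrings of $R$ still has quotient field $F$, and this removes the quotient field hypothesis.

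For the forward direction, let $X$ be a compact set of rank one valuation overrings of $R$. Compactness gives that $X$ is Hausdorff; since $A(X) \supseteq R$ has quotient field $F$, Proposition~\ref{T2}(3) yields $J(X) \ne 0$. Compactness also includes quasicompactness, so Theorem~\ref{correspond 1} applies and shows that $A(X)$ is a one-dimensional Pr\"ufer domain with quotient field $F$ and nonzero Jacobson radical. Since $A(X) \supseteq R$, it is an overring of $R$ of the required kind. For the backward direction, let $A$ be a one-dimensional Pr\"ufer overring of $R$ with $J(A) \ne 0$; by definition of overring, $A$ has quotient field $F$. Theorem~\ref{correspond 1} then says $X := \{A_M : M \in \Max(A)\}$ is a quasicompact set of rank one valuation rings with $J(X) \ne 0$ and $A(X) = A$. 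Since $J(X) \ne 0$, Proposition~\ref{T2}(2) shows the Zariski topology on $X$ coincides with the (Hausdorff) patch topology, so $X$ is Hausdorff and hence compact. Each $A_M$ contains $R$, so $X$ is a compact set of rank one valuation overrings of $R$. That the two assignments are mutually inverse is already part of Theorem~\ref{correspond 1}.

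I do not anticipate any real obstacle: the corollary is essentially a repackaging of Theorem~\ref{correspond 1} in the presence of a base ring $R$, and the only substantive point is recognizing that Proposition~\ref{T2}(3) converts the topological hypothesis (Hausdorff) plus the automatic quotient field condition into the algebraic hypothesis $J(X) \ne 0$ needed to invoke Theorem~\ref{correspond 1}. The shift from ``quasicompact'' in Theorem~\ref{correspond 1} to ``compact'' here is what makes the ``$J(X) \ne 0$'' clause disappear from the statement, and Proposition~\ref{T2} explains precisely why this trade-off is an equivalence in the present setting.
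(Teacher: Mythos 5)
Your proposal is correct and follows essentially the same route as the paper: the forward direction uses Proposition~\ref{T2}(3) (with the observation that $A(X)\supseteq R$ forces the quotient field to be $F$) to recover $J(X)\ne 0$ from Hausdorffness and then invokes Theorem~\ref{correspond 1}, while the backward direction gets quasicompactness from Theorem~\ref{correspond 1} and Hausdorffness from Proposition~\ref{T2}(2). The paper's proof is the same argument, only stated more tersely.
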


\begin{proof}  
If $X$ is a    compact set of rank one valuation overrings, then $J(X) \ne 0$ by Proposition~\ref{T2}(3). By Theorem~\ref{correspond 1}, 
 $A = A(X)$ is a one-dimensional Pr\"ufer domain with $J(A) \ne 0$ and  $X 
 = \{A_M:M \in \Max(A)\}$. 
%  If also $X$ consists of DVRs, then Theorem~\ref{correspond 1} implies that $A$ is an almost Dedekind domain. 
Conversely, if $A$ is one-dimensional Pr\"ufer overring of $R$ with $J(A) \ne 0$, then 
 $X=\{A_M:M \in \Max(A)\}$ is    quasicompact by Theorem~\ref{correspond 1}, and $X$ is  Hausdorff by Proposition~\ref{T2}(2). 
Clearly, $A = A(X)$.  
% If also $A$ is an almost Dedekind domain, then $X$ consists of DVRs.   
\end{proof}

\begin{remark} 
As in Corollary~\ref{correspond 1 cor}, the bijection in Corollary~\ref{correspond 2} restricts to a bijection between compact sets  of DVRs   and almost Dedekind overrings $A$ of $R$ with nonzero Jacobson radical. 
\end{remark} 

In general it is difficult to determine when an intersection of two one-dimensional Pr\"ufer domains with quotient field $F$ is a Pr\"ufer domain. For example, it is easy to see any Noetherian local UFD $A$ of Krull dimension $2$ can be written as an intersection $A = A_1 \cap A_2$ where  $A_1$ is a DVR overring and $A_2$ is  a PID overring. In this example, $A$ is not a Pr\"ufer domain despite being an intersection of two PIDs. Significantly, the ring $A_2$ here has $J(A_2) = 0$.  
 In our context, the topological characterization in Corollary~\ref{correspond 2} shows that the difficulty here is removed if $J(A_i) \ne 0$, a fact we prove in the next corollary.

  \begin{corollary} \label{underring} Let $A_1,\ldots,A_n$ be one-dimensional Pr\"ufer domains, each  with quotient field $F$.  Let $J =  J(A_1) \cap \cdots \cap J(A_n)$, and 
let  $A = A_1 \cap \cdots \cap A_n$. If $J(A_i) \cap A \ne 0$ for all $i$,   
  then $A$ is a one-dimensional Pr\"ufer domain with $J(A) = J$ and quotient field $F$. If also each $A_i$ is an almost Dedekind domain, then so is $A$.   
\end{corollary}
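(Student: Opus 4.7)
The plan is to reduce the corollary to Theorem~\ref{correspond 1} by exhibiting $A$ as the holomorphy ring of a suitable finite union of quasicompact sets of rank one valuation rings. For each $i$, Theorem~\ref{correspond 1} supplies the quasicompact set $X_i = \{(A_i)_M : M \in \Max(A_i)\}$ of rank one valuation rings, with $A(X_i) = A_i$ and $J(X_i) = J(A_i) \ne 0$. I would then set $X = X_1 \cup \cdots \cup X_n$. Since a finite union of quasicompact subspaces of $\Zar(F)$ is quasicompact, $X$ is a quasicompact set of rank one valuation rings, and it is immediate that
$$A(X) = \bigcap_i A(X_i) = \bigcap_i A_i = A \quad\text{and}\quad J(X) = \bigcap_i J(X_i) = \bigcap_i J(A_i) = J.$$

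The key point, and the only nontrivial step, is verifying $J(X) \ne 0$ so that Theorem~\ref{correspond 1} applies. This is where the hypothesis $J(A_i) \cap A \ne 0$ enters. For each $i$, I would choose a nonzero $a_i \in J(A_i) \cap A$, and then consider the product $a = a_1 \cdots a_n$, which is nonzero since we are in an integral domain. For each index $i$, the factor $a_i$ lies in the ideal $J(A_i)$ of $A_i$, and all other factors $a_j$ lie in $A \subseteq A_i$, so $a \in J(A_i)$. Hence $a \in J = J(X)$, and $J(X) \ne 0$.

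With $X$ identified as a quasicompact set of rank one valuation rings satisfying $J(X) \ne 0$ and $A(X) = A$, Theorem~\ref{correspond 1} (equivalently, Theorem~\ref{rank one theorem}) immediately gives that $A$ is a one-dimensional Pr\"ufer domain with quotient field $F$. Lemma~\ref{A lemma} then yields $J(A) = J(X) = J$, completing the first assertion.

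For the almost Dedekind refinement, if each $A_i$ is almost Dedekind, then each $X_i$ consists of DVRs, hence so does their union $X$. Applying Corollary~\ref{correspond 1 cor} in place of Theorem~\ref{correspond 1} in the argument above shows that $A$ is almost Dedekind. The main (and essentially only) obstacle is the verification that $J \ne 0$; the product trick using the hypothesis $J(A_i) \cap A \ne 0$ handles it cleanly, and the rest is a direct appeal to the correspondence established earlier in the paper.
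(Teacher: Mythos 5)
Your proposal is correct and follows essentially the same route as the paper: form $X = X_1 \cup \cdots \cup X_n$ with $X_i = \{(A_i)_M : M \in \Max(A_i)\}$, note $X$ is quasicompact with $A(X) = A$ and $J(X) = J$, and invoke Theorem~\ref{correspond 1}, Lemma~\ref{A lemma}, and Corollary~\ref{correspond 1 cor}. Your product argument $a = a_1\cdots a_n$ for verifying $J \ne 0$ is a correct (and welcome) elaboration of a step the paper states without detail.
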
 

\begin{proof} 
For each $i=1,2,\ldots,n$, let $$X_i =\{(A_i)_M:M \in \Max(A_i)\}.$$  By  Corollary~\ref{correspond 2}, each $X_i$ is compact.  Thus $X = X_1 \cup \cdots \cup X_n$ is quasicompact. Since $J(A_i) \cap A \ne 0$ for all $i$, we have $$J(X)  = J(A_1) \cap \cdots \cap J(A_n) \ne 0.$$ 
Thus Theorem~\ref{correspond 1} implies 
 $A = A(X)$ is a one-dimensional Pr\"ufer domain, and, by Lemma~\ref{A lemma}, $J(A) = J(A_1) \cap \cdots \cap J(A_n)$. 
% and q 
% 
% domain with quotient field $F$. By Remark~\ref{J remark}, $J$ is the Jacobson radical of $A$.  
If also $A_i$ is an almost Dedekind domain, then each $X_i$ is a   quasicompact set of DVRs, so that $X$ is also a   quasicompact set of DVRs.  In this case,  $A$ is an almost Dedekind domain by Corollary~\ref{correspond 1 cor}. 
\end{proof}

\begin{remark}  It is an open question as to whether the intersection $A = A_1 \cap A_2$ of one-dimensional Pr\"ufer domains $A_1$ and $A_2$ with quotient field $F$ and nonzero Jacobson radical   has quotient field $F$. 
 If the answer is affirmative, then $A$ is a one-dimensional Pr\"ufer domain by Corollary~\ref{underring}. In any case, let $D$ be the prime subring of $A$, let $S_i = \{1 + d:d \in J(A_i)\}$ and let $B_i = D_{S_i} + J(A_i)$. Then  $B_1$ and $B_2$ are local domains of Krull dimension one with quotient field $F$.  A necessary condition for $A$ to have quotient field $F$ is that $B_1 \cap B_2$ has quotient field $F$.  In \cite[Question~2.1]{GH}  Gilmer and Heinzer  ask whether the intersection of two one-dimensional local domains, each with the same quotient field $F$, has quotient field $F$? 
\end{remark}

\begin{corollary} \label{agree} Let $X$ be a nonempty quasicompact set of rank one valuation rings in $\Zar(F)$. If  $J(X) \ne 0$, then the patch, inverse and Zariski  topologies agree on $X$.
\end{corollary}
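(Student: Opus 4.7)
The plan is to reduce the corollary to Lemma~\ref{A lemma} by first identifying $X$ with the set of localizations of its holomorphy ring at its maximal ideals.

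First, I would apply Theorem~\ref{correspond 1} to the quasicompact set $X$ of rank one valuation rings with $J(X) \ne 0$: this gives that $A := A(X)$ is a one-dimensional Pr\"ufer domain with quotient field $F$, and that
\[
X \; = \; \{A_M : M \in \Max(A)\}.
\]
Moreover, since every rank one valuation ring is a proper subring of $F$, we have $F \notin X$, and Lemma~\ref{A lemma} then tells us that $J(A) = J(X) \ne 0$.

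With the hypotheses of Lemma~\ref{A lemma} now verified (namely $F \notin X$, $A = A(X)$, and $J(A) \ne 0$), the second assertion of that lemma directly yields that the Zariski, patch, and inverse topologies coincide on $X$, which is exactly what is to be shown.

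There is really no obstacle here: the work has already been done in Theorem~\ref{correspond 1} and Lemma~\ref{A lemma}. The corollary is a short packaging of those two results, serving to highlight the topological consequence of the ring-theoretic correspondence. The only subtlety worth a sentence of prose in the proof is to observe that $F \notin X$, which is automatic from the rank one hypothesis.
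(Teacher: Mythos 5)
Your proof is correct and follows exactly the paper's route: invoke Theorem~\ref{correspond 1} to see that $A(X)$ is a one-dimensional Pr\"ufer domain with quotient field $F$ and nonzero Jacobson radical, then apply Lemma~\ref{A lemma}. Your extra remark that $F \notin X$ (needed to satisfy the hypotheses of Lemma~\ref{A lemma}) is a small, welcome bit of added care that the paper leaves implicit.
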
 

\begin{proof}
%Since $X$ is Hausdorff, Proposition~\ref{T2} implies that $\bigcap_{V \in X}{\ff M}_V \ne 0$. Thus, 
By 
 Theorem~\ref{correspond 1},   $A=A(X)$ is a one-dimensional Pr\"ufer domain with $J(A) \ne 0$ and quotient field $F$. By Lemma~\ref{A lemma}, the  inverse, patch and Zariski topologies agree on $X$.
\end{proof} 

As the last application of this section, we describe  schemes  in $\X(F)$ consisting of rank $\leq 1$ valuation rings. 
 Let   $X$ be a subspace of $\X(F)$, and let $X^* =X \cup \{F\}$. 
 %In particular, we denote by $\R^*(F)$ the subspace $\R(F) \cup \{F\}$ of $\Zar(F)$.   
 Let $\O_{X^*}$ be the sheaf on $X^*$ defined for each nonempty open set ${\cal U}$ of ${X^*}$ by $\O_{X^*}({\cal U}) = \bigcap_{V \in {\cal U}}V$.  (The reason for appending $F$ to $X$ is to guarantee that  $\O_{X^*}$ is a sheaf.)  We say that {\it $X^*$ is a  scheme in $\X(F)$} if the locally ringed space $(X^*,\O_{X^*})$ is a scheme, and that {\it $X^*$ is an affine scheme in $\X(F)$} if  $(X^*,\O_{X^*})$ is an affine scheme. 
 %Thus $X$ is a scheme in $\X$ if and only if $X$ is a union of open subsets that are affine schemes in $\X$.  
 Thus $X^*$ is an affine scheme in $\X(F)$ if and only if  the set of all localizations $A(X)_P$, $P$ a nonzero prime ideal of $A(X)$, is $X$.  
The question of whether a subset of $\X(F)$ is an affine scheme is closely connected to the question of whether the intersection of valuation rings in the set is a Pr\"ufer domain with quotient field $F$. For more on this, see \cite{OZR}. 
%Since the affine spectrum of a ring is quasicompact, it is clear that a necessary condition for a subset $X$ of $\X(F)$ to be affine in $\X(F)_D({\mathbb{R}})$ is that $X$  is quasicompact. Similary, for $X$ to be a scheme in $\X(F)_D({\mathbb{R}})$ it is necessary for $X$ to be locally compact. 

%Recall from the introduction that by an   {\it affine scheme in $\Zar(F)$} we mean a subset $X$ of $\X(F)$ for which there exists a subring $R$ of $F$ with $X = \{R_P:P\in \Spec(R)\}$. A {\it scheme} $X$ in $\X(F)$ is a collection of open subsets $X_i$ of $X$ such that $X = \bigcup_i X_i$ and each $X_i$ is an affine scheme in $\X(F)$.   

A necessarily condition for $X$ to be an affine scheme in $\X(F)$ is that $X$ is quasicompact; similarly, for $X$ to be a scheme, $X$ must be locally quasicompact (i.e., every point has a quasicompact neighborhood). The corollary shows that these conditions are also sufficient for sets $X$ of  valuation rings of rank $\leq 1$ with $J(X) \ne 0$.   %For more on affine schemes in the Zariski-Riemann space, see \cite{OGeo, OZR}. Recall that for a subset $X$ of $\Zar(F)$, $X^*$ denotes $X \cup \{F\}$. 

 %Since the affine spectrum of a ring is quasicompact, it is clear that a necessary condition for a subset $X$ of $\X(F)$ to be affine in $\X(F)_D({\mathbb{R}})$ is that $X$  is quasicompact. Similary, for $X$ to be a scheme in $\X(F)_D({\mathbb{R}})$ it is necessary for $X$ to be locally compact. 

\begin{corollary}  \label{scheme}
Let $X $ be a nonempty set  of rank one valuation rings in $\Zar(R)$ with $J(X) \ne 0$. 
 Then $X^*$ is an affine scheme in $\X(F)$ if and only if $X$ is quasicompact; $X^*$ is a scheme in $\X(F)$ if and only $X$ is locally quasicompact.  
%
%
%if and only if $Z$ is    quasicompact.
%item[{\em (2)}] Suppose     $A = \bigcap_{V \in Z}V$ has quotient field $F$. 
%If $Z$ is compact, then $Z'$ is an affine scheme; if $Z$ is locally compact, then $Z'$ is a scheme in $\X(F)$.  

%\end{enumerate} 
\end{corollary}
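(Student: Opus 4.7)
The plan is to reduce both equivalences to the ring-theoretic characterization of affineness recorded just above the corollary, namely that $X^*$ is an affine scheme in $\X(F)$ if and only if $X = \{A(X)_P : 0 \ne P \in \Spec(A(X))\}$, combined with Theorem~\ref{correspond 1}. Throughout, write $A = A(X)$.

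For the affine case, if $X$ is quasicompact then Theorem~\ref{correspond 1} says $A$ is a one-dimensional Pr\"ufer domain with quotient field $F$, $J(A) \ne 0$, and $X = \{A_M : M \in \Max(A)\}$. Since $\dim A = 1$, the nonzero primes of $A$ are exactly its maximal ideals, so $X$ coincides with the set of localizations of $A$ at nonzero primes and $X^*$ is affine. Conversely, if $X^*$ is affine then $X = \{A_P : 0 \ne P \in \Spec(A)\}$, and since each such $A_P$ is a rank one valuation ring in $X \subseteq \X(F)$, the ring $A$ must be a one-dimensional Pr\"ufer domain with quotient field $F$. Lemma~\ref{A lemma} then gives $J(A) = J(X) \ne 0$, and Theorem~\ref{correspond 1} yields that $X = \{A_M : M \in \Max(A)\}$ is quasicompact.

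The main topological ingredient for the scheme case is that $F$ is the generic point of $\X(F)$: since $F$ belongs to every basic open set ${\cal U}(x_1, \ldots, x_n)$, every nonempty Zariski-open subset of $X^*$ has the form $U \cup \{F\}$ for some open $U \subseteq X$, and this set is exactly $U^*$. Moreover any such open $U$ still consists of rank one valuation rings and satisfies $J(U) \supseteq J(X) \ne 0$, so the affine case of the corollary already established applies to $U$ in place of $X$.

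With this observation in hand the scheme equivalence follows by localizing the affine case. If $X$ is locally quasicompact, then each $V \in X$ sits in a quasicompact open neighborhood $U_V \subseteq X$, and $U_V^*$ is open in $X^*$ and affine by the affine case; together with the fact that $F$ lies in every $U_V^*$, these open subsets cover $X^*$, showing it is a scheme. Conversely, if $X^*$ is a scheme, then each $V \in X$ has an affine open neighborhood ${\cal W}$ in $X^*$; by the topological remark we may write ${\cal W} = V_0 \cup \{F\} = V_0^*$ for some open $V_0 \subseteq X$ containing $V$, and the affine case applied to $V_0$ forces $V_0$ to be quasicompact, giving $V$ a quasicompact open neighborhood in $X$. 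I do not expect a substantive obstacle here: the only care required is the bookkeeping between open subsets of $X^*$ and of $X$, together with the observation that the standing hypotheses of rank one and $J \ne 0$ pass to open subsets, so that Theorem~\ref{correspond 1} can be applied locally.
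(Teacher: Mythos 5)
Your affine case is correct and follows the paper's route: quasicompactness of $X$ gives, via Theorem~\ref{correspond 1}, that $X = \{A_M : M \in \Max(A)\}$ with $A$ a one-dimensional Pr\"ufer domain of quotient field $F$, so $X^*$ is exactly the set of localizations of $A$ at its primes and hence affine; conversely an affine $X^*$ identifies $X$ with $\Max(A)$, which is quasicompact. Your reduction of the scheme case to the affine case, via the observation that every nonempty Zariski-open subset of $X^*$ equals $U^*$ for some open $U \subseteq X$ (because $F$ lies in every nonempty basic open set) and that the hypotheses ``rank one'' and ``$J \ne 0$'' pass to open subsets, is also sound, as is the direction ``scheme $\Rightarrow$ locally quasicompact.''

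The gap is in the first sentence of your last paragraph: from ``$X$ is locally quasicompact'' you conclude that each $V \in X$ has a quasicompact \emph{open} neighborhood. The paper defines locally quasicompact to mean that every point has a quasicompact neighborhood, not necessarily open, and in a general space a quasicompact neighborhood need not contain a quasicompact open one; the basic open sets ${\cal U}(x_1,\ldots,x_n) \cap X$ are not quasicompact a priori. This is precisely where the paper does its work: given a quasicompact neighborhood $Z$ of $V$ and a basic open set $Y = {\cal U}(x_1,\ldots,x_n)\cap X$ with $V \in Y \subseteq Z$, one has $Y = {\cal U}(x_1,\ldots,x_n) \cap Z$, which is inverse closed in $Z$; since $Z$ is quasicompact, consists of rank one valuation rings and satisfies $J(Z) \ne 0$, Corollary~\ref{agree} gives that the Zariski and inverse topologies agree on $Z$, so $Y$ is Zariski closed in the quasicompact set $Z$ and therefore quasicompact. (Alternatively, $Z$ is patch closed by Proposition~\ref{T2}(1)--(2), so $Y$ is patch closed and hence quasicompact by Lemma~\ref{list}(2).) Without an argument of this kind, the implication ``locally quasicompact $\Rightarrow$ scheme'' does not reduce to the affine case, so you should supply this step; the rest of your proposal then goes through.
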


\begin{proof} An affine scheme in $\X(F)$ is quasicompact since the prime spectrum of a ring is quasicompact. 
%If $Z \cup \{F\}$ is an affine scheme, then by {\bf [ref]} $A$ is a one-dimensional Pr\"ufer domain with quotient field $F$.  By Lemma~\ref{A lemma}, $Z$ is homeomorphic in the    topology to $\Max(A)$. Thus $A$ is    quasicompact. 
%Conversely, suppose $Z$ is quasicompact. By Proposition~\ref{T2}, $Z$ is compact.  
%Thus 
Conversely, if $X$ is quasicompact, then, with $A = A(X)$,  Theorem~\ref{correspond 1} implies that
% $A$ is a one-dimensional Pr\"ufer domain with quotient field $F$ and
 $X^* = \{A_P:P \in \Spec(A)\}$, so that $X^*$ is an affine scheme in $\X(F)$.  
 
 Now suppose $X$ is locally quasicompact.  Let $V \in X$, and let $Z$ be a quasicompact neighborhood of $V$ in  $X$. Then there is an open subset $Y$ of $X$ of the form $Y = {\cal U}(x_1,\ldots,x_n) \cap X$, where $x_1,\ldots,x_n \in V$ and $Y \subseteq Z$.
Since $Z$ is  quasicompact with $J(Z) \ne 0$, Corollary~\ref{agree} implies that the Zariski and inverse topologies agree on $Z$.  Thus $Y$ is a Zariski closed subset of $Z$. Since $Z$ is quasicompact,  $Y$ is also quasicompact, and hence $Y$ is an affine scheme that is open in $X$.  This shows that  $X$ is a union of affine open schemes, so that $X$ is a scheme in $\X(F)$. 
Conversely, if $X$ is a scheme in $\X(F)$, then $X$ is a union of open sets that are affine schemes in $\X(F)$. Thus $X$ is a union of  quasicompact open subsets, proving that $X$ is locally quasicompact. 
 %By Proposition~\ref{T2}, $Z$ is patch closed in $\X(F)$. By Lemma~\ref{A lemma}, $Z \cup \{F\}$ is an affine scheme. 
%
%and Corollary~\ref{correspond 2} imply that  $A$ is a one-dimensional Pr\"ufer domain with quotient field $F$ and $J(A) \ne 0$. 
%By Proposition~\ref{T2}, $Z$ is patch closed in $\X(F)$. By Lemma~\ref{A lemma}, $Z \cup \{F\}$ is an affine scheme.
%
%(2) Let $V \in Z \setminus \{F\}$, and let $X$ be a compact neighhborhood of $Z$ containing $V$. Then there is an open subset of $Z$ of the form $Y = {\cal U}(x_1,\ldots,x_n) \cap X$ where $x_1,\ldots,x_n \in V$. 
%Since $X$ is  compact, Corollary~\ref{agree} implies that the inverse and Zariski topologies agree on $X$.  Thus $Y$ is a patch closed subset of $X$. By (1), $Y$ is an affine scheme in $\X(F)$. Since $Y$ is also    open in $Z$, we conclude that $X$ is a scheme in $\X(F)$.  
%
%(3) If $X$ is a quasicompact subset of $Z$, then, since $\bigcap_{V \in X}{\ff M}_V \ne 0$, Proposition~\ref{T2} implies $X$ is compact. Thus $Z$ is locally compact, and, by (2), $Z'$ is a scheme. 
%Now  let $X$ be a    open subset of $Z$ such that $X$ is an affine scheme in $\X(F)$.  By Lemma~\ref{A lemma}, $Z$ is quasicompact, and hence,  as already noted, $Z$ is compact. By Theorem~\ref{correspond},  $A$ is a B\'ezout domain. {\bf Finish}. 
%
%Lemma 10.8, stacks.
\end{proof}

\section{Proof of Main Theorem}

In this section we 
prove the main theorem of the introduction. In light of Theorem~\ref{correspond 1}, what remains to be shown is  that 
 the one-dimensional Pr\"ufer domains with nonzero Jacobson radical are B\'ezout domains. In fact, we prove more generally that any one-dimensional domain with nonzero Jacobson radical has trivial Picard group.

\begin{theorem} \label{Pic}  If  $A$ is a one-dimensional domain with $J(A) \ne 0$, then every invertible ideal of $A$ is a principal ideal. 
\end{theorem}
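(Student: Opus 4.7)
Let $I$ be an invertible ideal of $A$, and write $J = J(A)$ and $\bar A = A/J$. My plan is to descend to the quotient ring $\bar A$, where every invertible module is forced to be free, and then lift a generator back to $A$ by Nakayama's lemma.

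First I will show that $\bar A$ is von Neumann regular. Since $A$ is one-dimensional and $J \ne 0$, any nonzero $t \in J$ makes $A/tA$ zero-dimensional, so $\bar A$, as a quotient of $A/tA$, is also zero-dimensional. The maximal ideals of $\bar A$ correspond bijectively to maximal ideals of $A$ (each of which contains $J$), so the Jacobson radical of $\bar A$ vanishes. In a zero-dimensional ring every prime is maximal, so the nilradical of $\bar A$ equals the intersection of its maximal ideals, which is zero; thus $\bar A$ is reduced. A zero-dimensional reduced commutative ring is von Neumann regular.

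Next I will invoke that $\mathrm{Pic}(\bar A) = 0$: every invertible module over a von Neumann regular ring is free. One way to see this is to pick a finite generating set $x_1, \ldots, x_n$ of the invertible $\bar A$-module, note that the set of primes where $x_i$ is a local generator is an open subset of the Stone space $\mathrm{Spec}(\bar A)$, use compactness plus total disconnectedness to refine to a finite clopen partition realized by orthogonal idempotents $e_1,\ldots,e_n \in \bar A$ summing to $1$, and observe that $\sum_i e_i x_i$ is then a global generator. Applied to $I/JI = I \otimes_A \bar A$ (invertible over $\bar A$ since $I$ is invertible over $A$), this gives $I/JI \cong \bar A$. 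Lifting a generator produces $a \in I$ with $I = Aa + JI$, and Nakayama's lemma, valid since $I$ is finitely generated and $J \subseteq J(A)$, then yields $I = Aa$, so $I$ is principal.

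The main obstacle is the freeness of invertible modules over $\bar A$. The quotient $\bar A$ need not be semi-local or Noetherian, so one cannot reduce to a product of fields and argue componentwise; instead, the combination of von Neumann regularity (every finitely generated ideal is idempotent-generated) with $J(\bar A) = 0$ (so orthogonal idempotents that sum to $1$ modulo every maximal ideal actually sum to $1$) is precisely what is needed to upgrade local generators to a single global generator.
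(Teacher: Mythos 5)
Your proof is correct, and it takes a genuinely different route from the paper's, even though both arguments rest on the same structural fact: for a one\-dimensional domain $A$ with $J = J(A) \ne 0$, the quotient $A/J$ is reduced and zero\-dimensional, hence von Neumann regular. The paper stays entirely at the level of elements of $A$: it first multiplies $I$ into $J$, invokes the two\-generator theorem for invertible ideals in one\-dimensional rings (Sally--Vasconcelos, Heitmann) to write $I=(a,b)A$, uses an idempotent of $A/J$ generating the image of the colon ideal $(aA:_A b)$ to split $\Max(A)$ into the locus where $a$ generates $I$ locally and the locus where $b$ does, and then exhibits the explicit generator $c=(a-e^k)(b-f^k)$, checking $I=cA$ locally. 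You instead pass to $\bar A = A/J$, prove $\operatorname{Pic}(\bar A)=0$ by the clopen\-partition/orthogonal\-idempotent argument on the Boolean space $\Spec(\bar A)$, and lift a generator of $I/JI$ back to $A$ by Nakayama's lemma; each step is sound (in particular $I/JI$ is indeed invertible over $\bar A$ as the base change of a finitely generated locally principal rank\-one module, and the surjectivity of $A\to I/Aa$ after adding $JI$ does reduce to Nakayama since $I$ is finitely generated and $J$ lies in the Jacobson radical). Your route is shorter and more modular, and it avoids the two\-generation theorem altogether; what the paper's computation buys is a completely element\-wise, self\-contained proof that produces an explicit principal generator from any two given generators of $I$, without appealing to projective modules or the vanishing of the Picard group of a von Neumann regular ring.
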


\begin{proof}  Let $I$ be an invertible ideal of $A$. We show that $I$ is a principal ideal of $A$.  After multiplying $I$ by a nonzero element of $J(A)$, we can assume $I \subseteq J(A)$.   Since $A$ has Krull dimension one and $I$ is invertible, there exist $0 \ne a,b \in I$ such that $I = (a,b)A$; see \cite[Corollary 4.3]{SV}, or \cite[Theorem 3.1]{Hei} for a more  general result. 
%To prove the theorem, it suffices to show that $I$ is a principal ideal of $A$.  
%Since $I$ is invertible, $I$ is locally principal. Thus, since $a,b \in J(A)$, Lemma~\ref{hmm} implies that $I$ is a principal ideal. 
Let $J = J(A)$, and let 
\begin{center}$X = \{M \in \Max(A):(aA:_Ab) \not \subseteq M\}$ \: and \:  $Y =  \{M \in \Max(A):(aA:_Ab) \subseteq M\}$. 
\end{center}  

First we show that there is $e \in A$ such that $e^2-e \in J$ and 
\begin{center}     
$X = \{M \in \Max(A):e \in M\}$ \: and \: $Y = \{M \in \Max(A):1-e \in M\}$.   
\end{center}
%and  $Y = \{M \in \Max(A):1-e \in M\}$.
Since $A/J$ is a reduced ring of Krull dimension $0$, $A/J$ is a von Neumann regular ring, and hence every  finitely generated ideal of $A/J$ is generated by an idempotent. In order to apply this observation to the image of  $(aA:_A b)$ in $A/J$, we claim that $(aA:_Ab)$ is a finitely generated ideal of $A$.     Since $I = (a,b)A$, we have $(aA:_Ab) = (aA:_F I)$. Since 
 $I$ is invertible, it follows that  $$(aA:_A b)I = (aA:_F I)I = aA.$$ With $I^{-1} = (A:_FI)$, we have $(aA:_A b) = aI^{-1}$. Since $I^{-1}$ is invertible, $I^{-1}$ is a finitely generated $A$-submodule of $F$, and it follows that $(aA:_A b)$ is a finitely generated ideal of $A$.      Therefore, since $A/J$ is a von Neumann regular ring,  there is $f \in (aA:_A b)$ such that $fA + J = (aA:_A b)+J$ and $f^2-f  \in J$. Set $e = 1-f$. Then $e^2 - e \in J$, and, since  
$J$ is contained in every maximal ideal of $A$, it follows that \begin{center}  $X = \{M \in \Max(A):e \in M\}$ \: and \:  $Y = \{M \in \Max(A):f \in M\}$.  
\end{center}

Next,  since $ab \in J$ and $A/\sqrt{abA}$ has Krull dimension 0, it follows that $J = \sqrt{abA}$. (Recall we have assumed that $I \subseteq J$.)   Since  $$ef = e-e^2 \in J = \sqrt{abA} = \sqrt{abJ},$$ there is $k>0$ such that $e^kf^k \in abJ$.   
Let $c = (a-e^k)(b-f^k)$. 
We claim that $I = cA$.  It suffices to check that this equality holds locally. 

Let $M$ be a maximal ideal of $A$. 
Suppose first that $M \in X$.  Then $(aA:_Ab) \not \subseteq M$, so  there exists $d \in A \setminus M$ such that $db \in aA$. It follows that $bA_M \subseteq aA_M$, and hence $IA_M = aA_M$. Thus to show that $IA_M = cA_M$, it suffices to show that $aA_M = cA_M$. 
If $b \not \in M$, then since $d \not \in M$ we have $db \not \in M$. However, with $b \not \in M$,  the fact that $ab \in M$ implies $db \in aA \subseteq M$, a contradiction. 
 Thus $b \in M$.  Now, since $e \in M$, we have $f = 1-e \not \in M$ and hence (because $b \in M$)  we conclude that $b-f^k \not \in M$. This implies that $cA_M = (a-e^k)A_M$. Since $$e^kA_M = e^kf^kA_M \subseteq aJA_M,$$ there is $j \in J$ and $h \in A \setminus M$ such that $he^k = aj$.   Thus $$h(a-e^k) = ha - he^k = ha - aj = (h-j)a.$$  Since $j \in M$ and $h \not \in M$, we have $h-j \not \in M$. From the fact that $h(a-e^k) = (h-j)a$ we conclude that $$cA_M = (a-e^k)A_M = aA_M,$$ which proves the claim that for each 
 $M \in X$, $IA_M = cA_M$.

Now suppose that $M \in Y$, so that $f \in M$.  We show that $IA_M = cA_M$ in this case also.  Since $(aA:_Ab) \subseteq M$, we have $bA_M \not \subseteq aA_M$. Since $I$ is invertible, $IA_M$ is a principal ideal of $A_M$.  
The following standard argument shows that this implies that $IA_M = bA_M$.  Let $z \in I$ such that $IA_M = zA_M$.  
Then there exist $x,y, s, t \in A_M$ such that $a = zx$, $b = zy$ and $z=  as + bt$.  If $x$ is a unit in $A_M$, then $bA_M \subseteq zA_M = aA_M$, a contradiction. Thus $x$ is not a unit in $A_M$. Since $a = zx = (as+bt)x$, we have $a(1-sx)= btx$, with $1-sx$ a unit in $A_M$ since $x \in MA_M$. Therefore, $aA_M \subseteq bA_M$, which proves that $IA_M = bA_M$.  

We have shown that for $M \in Y$, we have $IA_M = bA_M$.  
To complete the proof of the lemma, it suffices to show  that $bA_M = cA_M$.  
The proof proceeds as in the case where $M \in X$.  Since $bA_M \not \subseteq aA_M$, it follows that $a \in M$, and hence $a-e^k \not \in M$.  Thus $cA_M = (b-f^k)A_M$. Also, $$f^kA_M = e^kf^kA_M \subseteq bJA_M,$$ so that 
there is $h \in A \setminus M$ such that $hf^k = bj$ for some $j \in J$. Thus $$h(b-f^k) = hb -bj = b(h-j),$$ with $h,h-j$ units in $A_M$. Therefore, $(b-f^k)A_M = bA_M$, which shows that $$cA_M = (b-f^k)A_M = bA_M = IA_M.$$ This proves that $IA_M = cA_M$ for all maximal ideals $M $ of $Y$. Since $\Max(A) = X \cup Y$,  we conclude that $I = cA$.  
\end{proof} 

\begin{corollary} \label{Pic 2} If  $A$ is a one-dimensional Pr\"ufer domain with $J(A) \ne 0$, then $A$ is a B\'ezout domain.
\end{corollary}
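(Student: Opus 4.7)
The plan is to combine Theorem~\ref{Pic} with the defining property of Pr\"ufer domains. Recall that a Pr\"ufer domain is characterized by the property that every nonzero finitely generated ideal is invertible (equivalently, $A_M$ is a valuation domain for every maximal ideal $M$, together with the fact that in such localizations finitely generated ideals are principal, hence invertible globally). A B\'ezout domain, on the other hand, is a domain in which every finitely generated ideal is principal.

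The proof therefore reduces to a single combination: let $I$ be a nonzero finitely generated ideal of $A$. Since $A$ is a Pr\"ufer domain, $I$ is invertible. Since $A$ is one-dimensional and $J(A) \ne 0$, Theorem~\ref{Pic} applies and yields that $I$ is principal. As this holds for every finitely generated ideal, $A$ is a B\'ezout domain.

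There is essentially no obstacle here: all of the substantive work has been carried out in Theorem~\ref{Pic}, and the only ingredient still needed is the standard equivalence between the Pr\"ufer condition and the invertibility of all nonzero finitely generated ideals, which can simply be cited. The proof thus amounts to one or two sentences.
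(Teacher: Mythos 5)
Your proposal is correct and follows exactly the paper's own argument: the paper's proof is the one-line observation that every finitely generated ideal of a Pr\"ufer domain is invertible (citing Gilmer, Theorem 22.1), so Theorem~\ref{Pic} makes each such ideal principal. Nothing is missing.
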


\begin{proof} This follows from Theorem~\ref{Pic} and  the fact that every finitely generated ideal of a Pr\"ufer domain is invertible \cite[Theorem 22.1]{G}. 
\end{proof} 

 A special case of the lemma  in which it is assumed in addition that $A$ is an almost Dedekind domain for which every maximal ideal of $A$ has finite sharp degree was proved by Loper and Lucas \cite[Theorem 2.9]{LL} using different methods.
 
  With Corollary~\ref{Pic 2} and the results of the preceding sections, we can now prove the main theorem from the introduction. 
 
\begin{theorem} \label{new main theorem}
If $X$ is a quasicompact set of rank one valuation rings in $\Zar(F)$ such that $J(X) \ne 0$, then $A(X)$ is a B\'ezout domain of Krull dimension one with quotient field $F$.  
\end{theorem}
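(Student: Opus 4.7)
The plan is to assemble the statement directly from the two main building blocks already developed in the paper, with essentially no new work required at this final stage. The difficult pieces (characterizing the Prüfer property via patch limit points, and showing triviality of the Picard group of one-dimensional domains with nonzero Jacobson radical) have been carried out in the previous sections; here we only need to combine them.

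First, I would apply Theorem~\ref{correspond 1} to the set $X$. Since by hypothesis $X$ is a quasicompact set of rank one valuation rings in $\X(F)$ with $J(X) \ne 0$, the correspondence there tells us immediately that $A(X)$ is a one-dimensional Pr\"ufer domain with quotient field $F$ whose Jacobson radical is nonzero (indeed, $J(A(X)) = J(X)$ by Lemma~\ref{A lemma}). This step alone already establishes the claim about Krull dimension one and about $F$ being the quotient field.

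Second, with $A(X)$ identified as a one-dimensional Pr\"ufer domain with nonzero Jacobson radical, I would invoke Corollary~\ref{Pic 2}, which asserts that any such domain is a B\'ezout domain. This yields the final assertion that every finitely generated ideal of $A(X)$ is principal.

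Since the proof is purely a citation of two previously established results, there is no real obstacle at this stage; all the substantive work has already occurred. The key technical engine behind Theorem~\ref{correspond 1} is Theorem~\ref{ugh}, whose proof rests on the residually transcendental limit point construction (Theorem~\ref{trans}) and the rank-elevation lemma (Lemma~\ref{step}); the key engine behind Corollary~\ref{Pic 2} is Theorem~\ref{Pic}, which uses the von Neumann regularity of $A/J(A)$ to split $\Max(A)$ into the complementary pieces $X$ and $Y$ and produce an explicit principal generator $c = (a-e^k)(b-f^k)$ for any invertible ideal $(a,b)A$. The present theorem is simply the clean statement that emerges after all this machinery is in place.
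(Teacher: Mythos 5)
Your proposal is correct and matches the paper's own proof exactly: the paper likewise deduces from Theorem~\ref{correspond 1} that $A(X)$ is a one-dimensional Pr\"ufer domain with quotient field $F$ and nonzero Jacobson radical, and then cites Corollary~\ref{Pic 2} to conclude it is a B\'ezout domain. Nothing further is needed.
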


\begin{proof}
 By Theorem~\ref{correspond 1}, $A(X)$ is a one-dimensional Pr\"ufer domain with quotient field $F$ and nonzero Jacobson radical. By Corollary~\ref{Pic 2}, $A(X)$ is a B\'ezout domain, which proves the theorem. 
\end{proof}

%\begin{corollary*} If $X$ is an affine scheme in $\R^*(F) $ with $J(X \setminus \{F\}) \ne 0$, then {\rm Pic}$(X) = 0$.
%\end{corollary*}

%\begin{proof}
%f $X$ is an affine scheme with $J(X) \ne 0$, then, by  Theorem~\ref{correspond 1} and Corollary~\ref{hmm cor}, $A(Z)$ is a one-dimensional Pr\"ufer domain with nonzero Jacobson radical and quotient field $F$. Thus Corollary~\ref{hmm cor} implies Pic$(X) =0$.  
%\end{proof}

\begin{remark} \label{last remark} In \cite{ODiv}  we apply the results of this article to rank one valuation overrings of a two-dimensional Noetherian local domain $D$ with quotient field $F$.  We focus on the divisorial valuation overrings of $D$, i.e., the  DVRs that birationally dominate $D$ and are residually transcendental over $D$. It is shown, for example, that if $n$ is  a positive integer, then the subspace  $X$ of $\Zar(F/D)$ consisting of all divisorial  valuation rings that can be reached through an iterated  sequence of at most $n$ normalized quadratic transforms of $D$ is quasicompact. By Corollary~\ref{correspond 1 cor}, 
$A(X)$ is 
an almost Dedekind domain with nonzero Jacobson radical.  
\end{remark}

{\it Acknowledgment.} I thank the referee for helpful comments that improved the presentation of the article and for suggesting the   version of Example 5.7(2) that is included here.

\end{document}